\newcommand{\T}{\mathscr{T}}
\newcommand{\End}{\mathrm{End}}
\newcommand{\PEnd}{\mathrm{PEnd}}
\newcommand{\wPEnd}{\mathrm{wPEnd}}
\newcommand{\wEnd}{\mathrm{wEnd}}
\newcommand{\Aut}{\mathrm{Aut}}
\newcommand{\IEnd}{\mathrm{IEnd}}
\newcommand{\PAut}{\mathrm{PAut}}
\newcommand{\PT}{\mathscr{PT}}
\newcommand{\Sym}{\mathscr{S}}
\newcommand{\I}{\mathscr{I}}
\theoremstyle{plain}
\newtheorem{theorem}{Theorem}[section]
\newtheorem{proposition}[theorem]{Proposition}
\newtheorem{lemma}[theorem]{Lemma}
\newtheorem{corollary}[theorem]{Corollary}
\theoremstyle{remark}
\def\im{\mathop{\mathrm{Im}}\nolimits}
\def\dom{\mathop{\mathrm{Dom}}\nolimits}
\def\rank{\mathop{\mathrm{rank}}\nolimits}
\def\N{\mathbb N}
\def\id{\mathop{\mathrm{id}}\nolimits}
\numberwithin{equation}{section}
\begin{document}

\title{Partial Automorphisms and Injective Partial Endomorphisms of a Finite Undirected Path\footnote{
This work is funded by national funds through the FCT - Funda\c c\~ao para a Ci\^encia e a Tecnologia, I.P., under the scope of the project UIDB/00297/2020 (Center for Mathematics and Applications).}}

\author{I. Dimitrova, V.H. Fernandes, J. Koppitz and T.M. Quinteiro}

\maketitle

\renewcommand{\thefootnote}{}

\footnote{2010 \emph{Mathematics Subject Classification}: 05C38, 20M10, 20M20, 05C25}

\footnote{\emph{Keywords}: injective partial endomorphisms, partial automorphisms, paths, generators, rank.}

\renewcommand{\thefootnote}{\arabic{footnote}}
\setcounter{footnote}{0}

\begin{abstract}
In this paper, we study partial automorphisms and, more generally, injective partial endomorphisms of a finite undirected path from Semigroup Theory perspective. Our main objective is to give formulas for the ranks of the monoids $\IEnd(P_n)$ and $\PAut(P_n)$ of all injective partial endomorphisms and of all partial automorphisms of the undirected path $P_n$ with $n$ vertices. We also describe Green's relations of $\PAut(P_n)$ and $\IEnd(P_n)$ and calculate their cardinals.
\end{abstract}

\section*{Introduction and Preliminaries}

As well as automorphisms of graphs allow to establish natural connections between Graph Theory and Group Theory, endomorphisms of graphs allow similar connections between Graph Theory and Semigroup Theory. Likewise, in particular, partial automorphisms of graphs relates Graph Theory with Inverse Semigroup Theory. This has led, over the last decades, many authors to become interested in the study of combinatorial and algebraic properties of monoids of endomorphisms of graphs. One of the most studied algebraic notions is
\textit{regularity}, in the sense of Semigroup Theory.
A general solution to the problem, posed in 1988 by M\'arki \cite{Marki:1988}, of which graphs have a regular monoid of endomorphisms proved very difficult to obtain. Despite that, various authors studied and solved this question for some special classes of graphs (for instance, see  \cite{Fan:1993,Fan:1997,Fan:2002,Hou&Gu:2016,Hou&Gu&Shang:2014,Hou&Gu&Shang:2015,Hou&Luo&Fan:2012,Hou&Song&Gu:2017,Knauer&Wanichsombat:2014,Li:2006,Li&Chen:2001,Lu&Wu:2013,Pipattanajinda&Knauer&Gyurov&Panma:2016,Wilkeit:1996}).

\smallskip

The \textit{rank} of a monoid $S$, denoted by $\rank S$, is the least number of generators of $S$.
In this paper, we focus our attention on this important notion of Semigroup Theory, which has been, in recent years, the subject of intensive research.

\smallskip

Let $\Omega$ be a finite set
with at least $2$ elements.
It is well-known that the symmetric group $\Sym(\Omega)$ of $\Omega$
has rank $2$ (as a semigroup, a monoid or a group). Furthermore, the monoid of all transformations $\T(\Omega)$ of $\Omega$,
the monoid of all partial transformations $\PT(\Omega)$ of $\Omega$ and the symmetric inverse monoid $\I(\Omega)$ of $\Omega$ have
ranks $3$, $4$, and $3$, respectively.
The survey \cite{Fernandes:2002} presents
these results and similar ones for other classes of transformation monoids,
in particular, for monoids of order-preserving transformations and
for some of their extensions. More recently, for instance, the papers
\cite{AlKharousi&Kehinde&Umar:2014s,Araujo&al:2015,Cicalo&al:2015,Fernandes&al:2014,Fernandes&al:2016,Fernandes&al:2018ip,Fernandes&Quinteiro:2014,Fernandes&Sanwong:2014,Huisheng:2009,Zhao:2011,Zhao&Fernandes:2015}
are dedicated to the computation of the ranks of certain (classes of transformation) semigroups or monoids.

\medskip

Now, let $G=(V,E)$ be a simple graph (i.e. an undirected graph without loops and without multiple edges).
Let $\alpha$ be a partial transformation of $V$.
Denote by $\dom \alpha$ the domain of $\alpha$ and by $\im \alpha$ the image of $\alpha$.
We say that $\alpha$ is:
\begin{itemize}
\item a \textit{partial endomorphism} of $G$ if $\{u,v\}\in E$ implies  $\{u\alpha,v\alpha\}\in E$, for all $u,v\in\dom \alpha$;
\item a \textit{weak partial endomorphism} of $G$ if $\{u,v\}\in E$ and $u\alpha\ne v\alpha$ imply  $\{u\alpha,v\alpha\}\in E$, for all $u,v\in\dom \alpha$;
\item a \textit{partial automorphism} of $G$ if $\alpha$ is an injective mapping (i.e. a partial permutation) and  $\alpha$ and $\alpha^{-1}$ are both partial endomorphisms;

\item if $\alpha$ is a full mapping (i.e. $\alpha\in\T(V)$) then to a partial endomorphism (respectively, weak partial endomorphism and partial automorphism) we just call \textit{endomorphism} (respectively, \textit{week endomorphism} and \textit{automorphism}).
\end{itemize}

Notice that, any partial endomorphism is a weak partial endomorphism and any injective weak partial endomorphism is an (injective) partial endomorphism. Moreover, for finite graphs, any bijective endomorphism is an automorphism.

\medskip

Denote by:
\begin{itemize}
\item $\wPEnd(G)$ the set of all weak partial endomorphisms of $G$;
\item $\PEnd(G)$ the set of all partial endomorphisms of $G$;
\item $\End(G)$ the set of all endomorphisms of $G$;
\item $\wEnd(G)$ the set of all weak endomorphisms of $G$;
\item $\IEnd(G)$ the set of all injective partial endomorphisms of $G$;
\item $\PAut(G)$ the set of all partial automorphisms of $G$;
\item $\Aut(G)$ the set of all automorphisms of $G$.
\end{itemize}

Clearly, $\wPEnd(G)$, $\PEnd(G)$, $\End(G)$, $\wEnd(G)$, $\IEnd(G)$, $\PAut(G)$, and $\Aut(G)$ are monoids under composition of
maps with the identity mapping $\id$ as the identity element. Moreover, $\Aut(G)$ is also a group and $\PAut(G)$ is an inverse semigroup:
$\Aut(G) \subseteq \Sym(V)$ and $\PAut(G) \subseteq \I(V)$.
It is also clear that
$$
\Aut(G) \subseteq \End(G) \subseteq \wEnd(G) \subseteq \wPEnd(G)
\quad\text{and}\quad
\Aut(G) \subseteq \PAut(G) \subseteq \IEnd(G) \subseteq \PEnd(G) \subseteq \wPEnd(G)
$$
(these inclusions may not be strict).

\medskip

Let $\N$ be the set of all natural numbers greater than zero and let $n\in\N$.
Let $P_n$ be the undirected path with $n$ vertices. Notice that we may take
$$
P_n=\left(\{1,\ldots,n\},\{\{i,i+1\}\mid i=1,\ldots,n-1\}\right).
$$

The number of endomorphisms of $P_n$ has been determined by Arworn
\cite{Arworn:2009} (see also the paper \cite{Michels&Knauer:2009} by Michels and Knauer).
In addition, several other combinatorial and algebraic properties of $P_n$ were also studied in these two papers and also,
for instance, in \cite{Arworn&Knauer&Leeratanavalee:2008,Hou&Luo&Cheng:2008}.
The authors in \cite{Dimitrova&Fernandes&Koppitz&Quinteiro:2019online} studied several properties of $\End(P_n)$ and $\wEnd(P_n)$. In particular, they characterized regular elements of $\End(P_n)$ and $\wEnd(P_n)$, determined the cardinal of $\wEnd(P_n)$ and, for $n\ge2$,  showed that
$$
\rank \wEnd(P_n) = n+\sum_{j=1}^{\lfloor\frac{n-3}{3}\rfloor}\lfloor\frac{n-3j-1}{2}\rfloor
\quad\text{and}\quad
\rank \End(P_n) = 1+\lfloor\frac{n-1}{2}\rfloor+\sum_{j=1}^{\lfloor\frac{n-3}{3}\rfloor}\lfloor\frac{n-3j-1}{2}\rfloor.
$$
Notice that $P_{1}=(\{1\}, \emptyset)$. Thus, for $n=1$, both the monoids $\End(P_n)$ and $\wEnd(P_n)$ are trivial and so naturally they have (monoid) rank equal to zero.

\medskip

 The main objective of the present paper is to determine the ranks of the monoids $\PAut(P_n)$ and $\IEnd(P_n)$. We will show that
 $$
 \rank \PAut(P_n) =
 \left\{
 \begin{array}{ll}
 2 & \mbox{for $n=1$} \\
 2 & \mbox{for $n=2$} \\
 3 & \mbox{for $n=3$} \\
 n-1 & \mbox{for $n\ge4$}
 \end{array}
 \right.
 \qquad\text{and}\qquad
  \rank \IEnd(P_n) =
 \left\{
 \begin{array}{ll}
 2 & \mbox{for $n=1$} \\
 2 & \mbox{for $n=2$} \\
 4 & \mbox{for $n=3$} \\
 n+\lceil\frac{n}{2}\rceil-2 & \mbox{for $n\ge4$}.
 \end{array}
 \right.$$

We also aim to describe Green's relations of $\PAut(P_n)$ and $\IEnd(P_n)$ and to calculate the cardinals of both monoids.

\medskip

Observe that $\PAut(P_n)$ and $\IEnd(P_n)$ are submonoids of the symmetric inverse monoid $\I_n=\I(\{1,\ldots,n\})$.

\medskip

Recall that the Green's relations $\mathcal{L}$, $\mathcal{R}$, and $\mathcal{J}$ of a
monoid $S$ are defined as following: for $\alpha, \beta \in S$,
\begin{itemize}
\item $\alpha \mathcal{L} \beta$ if and only if there exist $\gamma, \delta \in S$ such that $\alpha = \gamma\beta$ and $\beta = \delta\alpha$;

\item $\alpha \mathcal{R} \beta$ if and only if there exist $\gamma', \delta' \in S$ such that $\alpha = \beta\gamma'$ and $\beta = \alpha\delta'$;

\item $\alpha \mathcal{J} \beta$ if and only if there exist $\gamma, \gamma', \delta, \delta' \in S$ such that $\alpha = \gamma\beta\gamma'$ and $\beta = \delta\alpha\delta'$.
\end{itemize}
The relations $\mathcal{L}$ and $\mathcal{R}$ commute (i.e. $ \mathcal{L} \circ \mathcal{R} = \mathcal{R} \circ \mathcal{L}$)
and the Green's relation $\mathcal{D}$ is defined by
$\mathcal{D} = \mathcal{L} \circ \mathcal{R} = \mathcal{R} \circ \mathcal{L}$ (i.e. $\alpha \mathcal{D} \beta$ if and only if there exists $\sigma \in S$
such that $\alpha \mathcal{L} \sigma \mathcal{R} \beta$, for $\alpha, \beta \in S$).
Notice that for a finite monoid the relations $\mathcal{J}$ and $\mathcal{D}$ coincide.
Finally, we have the Green's relation $\mathcal{H}$ defined by $\mathcal{H} = \mathcal{L} \cap \mathcal{R}$.

\medskip

If $S$ is an inverse semigroup of injective partial transformations on a given set,
then the relations $\mathcal{L}$, $\mathcal{R}$, and $\mathcal{H}$
can be described as following: for $\alpha, \beta \in S$,
\begin{itemize}
\item $\alpha \mathcal{L} \beta$ if and only if $\im \alpha = \im \beta$;

\item $\alpha \mathcal{R} \beta$ if and only if $\dom \alpha = \dom \beta$;

\item $\alpha \mathcal{H} \beta $ if and only if $\im \alpha = \im \beta $ and $\dom \alpha = \dom \beta$.
\end{itemize}

Since $\PAut(P_n)$ is an inverse semigroup, it remains to obtain a description of its Green's relation $\mathcal{J}$.
On the other hand, that is not the situation of $\IEnd(P_n)$, for $n \geq 3$, since $\IEnd(P_n)$ is not an inverse semigroup
(for instance,
$\left(
\begin{array}{cc}
1&3 \\
1&2
\end{array}\right)\in\IEnd(P_n)$ is not a regular element of $\IEnd(P_n)$).
Notice that, $\IEnd(P_n) = \PAut(P_n)$, for $n=1, 2$.

\medskip

For general background on Semigroup Theory and standard notation, we refer the reader to Howie's book \cite{Howie:1995}.
Regarding Algebraic Graph Theory, our main reference is Knauer's book \cite{Knauer:2011}.

\section{Green's Relations}

Let $n \in \N$. We now describe the Green's relations $\mathcal{L}$, $\mathcal{R}$, $\mathcal{H}$, and $\mathcal{J}$
of the monoid $\IEnd(P_{n})$ as well as the Green's relation $\mathcal{J}$ of the inverse monoid $\PAut(P_{n})$.

In this section, for a set $X \subseteq \N$, we need the following concept.
A set $I\subseteq X$ is called a \textit{maximal interval} of $X$ if $I$ satisfies the
following properties:
\begin{itemize}
  \item $I$ is an interval of $X$ (i.e. $x,y\in I$ and $z\in \N$ with $x<z<y$ implies $z\in I$);
  \item  if $J\subseteq X$ is an interval of $X$ then $I\subseteq J$ implies $I=J$.
\end{itemize}

Recall that a partial transformation $\alpha$ of $\{1,\ldots,n\}$ is said to be \textit{order-preserving} (respectively, \textit{order-reversing})
if $x<y$ implies $x\alpha\leq y\alpha$ (respectively, if $x<y$ implies $x\alpha\geq y\alpha$), for all $x,y\in\dom\alpha$.

\medskip

Let $\alpha\in\I_n$. The following observations are easy to show:
\begin{itemize}
\item $\alpha\in\IEnd(P_{n})$ if and only if for each interval $I$ of $\dom\alpha$ the image $I\alpha$ is an interval of $\im\alpha$;

\item if $\alpha\in\IEnd(P_{n})$ then $\alpha$ is order-preserving or order-reversing in $I$
(i.e. the restriction $\alpha|_I$ of $\alpha$ to $I$ is an order-preserving or order-reversing transformation),
for each interval $I$ of $\dom\alpha$;

\item if for each maximal interval $I$ of $\dom\alpha$ the image $I\alpha$ is an interval of $\im\alpha$ and $\alpha$ is order-preserving or order-reversing in $I$ then $\alpha\in\IEnd(P_{n})$;

\item if $\alpha\in\PAut(P_n)$ and $I$ is a maximal interval of $\dom\alpha$ then the image $I\alpha$ is a maximal interval of $\im\alpha$;

\item if for each maximal interval $I$ of $\dom\alpha$ the image $I\alpha$ is a maximal interval of $\im\alpha$ and $\alpha$ is order-preserving or order-reversing in $I$ then $\alpha\in\PAut(P_{n})$.
\end{itemize}

Let $\alpha\in\I_n$. Let $\{X_1,\ldots,X_k\}$ be a partition of $\dom\alpha$.
We will use the notation $\alpha=\left(
                      \begin{array}{ccc}
                        X_1 & \cdots & X_k \\
                        Y_1& \cdots & Y_k \\
                      \end{array}
                    \right)$
to express that $Y_i=(X_i)\alpha$, for $i\in\{1,\ldots,k\}$.

\medskip

Let $\alpha, \beta \in \IEnd(P_n)$.
Since $\IEnd(P_{n})$ is a submonoid of the inverse monoid $\I_{n}$, if $\alpha \mathcal{L} \beta $
(respectively, $\alpha\mathcal{R} \beta$) in $\IEnd(P_{n})$ then $\alpha \mathcal{L} \beta$ (respectively, $\alpha\mathcal{R} \beta$) in $\I_{n}$, whence
$\im \alpha = \im \beta$ (respectively, $\dom \alpha = \dom \beta$).
Moreover, we have the following descriptions of the relations $\mathcal{L}$ and $\mathcal{R}$ in $ \IEnd(P_n)$:

\begin{proposition}\label{prGR1}
Let $\alpha, \beta \in \IEnd(P_n)$ and let $\{I_1, I_2, \ldots, I_k\}$ and $\{I_1', I_2', \ldots, I_l'\}$ be the $($partitions into$)$ maximal intervals of $\dom \alpha$
and in $\dom \beta$, respectively. Then, the following three conditions are equivalent:
\begin{enumerate}
  \item $\alpha {\cal L} \beta$;
  \item $\{I_1\alpha, I_2\alpha, \ldots, I_k\alpha\} = \{I_1'\beta, I_2'\beta, \ldots, I_l'\beta\}$;
  \item $\im\alpha = \im\beta$ and $\alpha\beta^{-1} \in \PAut(P_n)$.
\end{enumerate}
\end{proposition}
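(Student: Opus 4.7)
The plan is to prove the cyclic chain $(1) \Rightarrow (2) \Rightarrow (3) \Rightarrow (1)$, leaning on the five bulleted observations about $\IEnd(P_n)$ and $\PAut(P_n)$ listed just before the proposition.

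For $(1) \Rightarrow (2)$, I would pick witnesses $\gamma, \delta \in \IEnd(P_n)$ with $\alpha = \gamma\beta$ and $\beta = \delta\alpha$, which in particular give $\im\alpha = \im\beta$. Fix a maximal interval $I_i$ of $\dom\alpha$. Since $I_i \subseteq \dom\gamma$ is an interval of $\N$, it is contained in a single maximal interval $J$ of $\dom\gamma$. Applying the observation that, on every maximal interval of its domain, $\gamma$ is monotone and sends intervals to intervals, I conclude that $I_i\gamma$ is an interval of $\dom\beta$, hence contained in some maximal interval $I_j'$ of $\dom\beta$. Therefore $I_i\alpha = (I_i\gamma)\beta \subseteq I_j'\beta$. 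Running the same argument with $\beta = \delta\alpha$ in place of $\alpha = \gamma\beta$ yields $I_j'\beta \subseteq I_{i'}\alpha$ for some $i'$. The chain $I_i\alpha \subseteq I_j'\beta \subseteq I_{i'}\alpha$, combined with the pairwise disjointness of the sets $I_i\alpha$ (injectivity of $\alpha$), forces equalities throughout, so $I_i\alpha \in \{I_1'\beta,\ldots,I_l'\beta\}$. A symmetric argument gives the reverse inclusion, proving $(2)$.

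For $(2) \Rightarrow (3)$, the hypothesis at once yields $\im\alpha = \bigcup_i I_i\alpha = \bigcup_j I_j'\beta = \im\beta$, together with $k = l$ and a permutation $\sigma$ of $\{1,\ldots,k\}$ such that $I_i\alpha = I_{\sigma(i)}'\beta$. I verify $\alpha\beta^{-1} \in \PAut(P_n)$ via the last bulleted observation. Its domain is $\{x \in \dom\alpha : x\alpha \in \im\beta\} = \dom\alpha$, whose maximal intervals are $I_1,\ldots,I_k$; and $I_i(\alpha\beta^{-1}) = (I_{\sigma(i)}'\beta)\beta^{-1} = I_{\sigma(i)}'$, a maximal interval of $\im(\alpha\beta^{-1}) = \dom\beta$. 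On $I_i$, the restriction $\alpha|_{I_i}$ is order-preserving or order-reversing by the second observation, and so is $\beta^{-1}|_{I_{\sigma(i)}'\beta}$ (inherited from $\beta|_{I_{\sigma(i)}'}$); hence the composition $(\alpha\beta^{-1})|_{I_i}$ is monotone on $I_i$, as required.

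For $(3) \Rightarrow (1)$, set $\gamma := \alpha\beta^{-1}$ and $\delta := \beta\alpha^{-1}$. Since $\PAut(P_n)$ is an inverse semigroup, $\delta = \gamma^{-1} \in \PAut(P_n) \subseteq \IEnd(P_n)$. The hypothesis $\im\alpha = \im\beta$ gives $\gamma\beta = \alpha\beta^{-1}\beta = \alpha$ (as $\beta^{-1}\beta$ is the identity on $\im\beta \supseteq \im\alpha$), and symmetrically $\delta\alpha = \beta$, so $\alpha \mathrel{\mathcal L} \beta$ in $\IEnd(P_n)$.

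The main obstacle is $(1) \Rightarrow (2)$: the ambient $\mathcal L$-relation in $\I_n$ records only $\im\alpha = \im\beta$, so the finer partition-level conclusion must be extracted from the structural properties of the witnesses $\gamma, \delta$. The crucial geometric point making this work is that an interval of $\N$ contained in $\dom\gamma$ necessarily lies inside a single maximal interval of $\dom\gamma$, which is precisely what lets the bulleted observations be applied to $\gamma|_{I_i}$ even though $I_i$ need not itself be a maximal interval of $\dom\gamma$.
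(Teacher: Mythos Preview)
Your proof is correct and follows essentially the same cyclic route $(1)\Rightarrow(2)\Rightarrow(3)\Rightarrow(1)$ as the paper, with the same key ingredients: using the witnesses $\gamma,\delta$ to show each $I_i\alpha$ lands inside some $I_j'\beta$ (and vice versa), identifying the permutation $\sigma$ to verify that $\alpha\beta^{-1}$ carries maximal intervals to maximal intervals monotonically, and then using $\alpha\beta^{-1},\beta\alpha^{-1}\in\PAut(P_n)$ as the $\mathcal L$-witnesses. Your exposition is in places slightly more explicit than the paper's (e.g.\ the chaining-and-disjointness argument in $(1)\Rightarrow(2)$, and the remark that an interval of $\N$ inside $\dom\gamma$ sits in a single maximal interval), but the substance is the same.
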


\begin{proof}$\nonumber$  [$1 \Rightarrow 2$]
Suppose that $\alpha {\cal L} \beta$. Then, by the definition of the Green's relation $\cal L$,
there exist $\gamma, \delta \in \IEnd(P_n)$ such that $\alpha = \gamma\beta$ and $\beta = \delta\alpha$.
Let $i \in \{1,\ldots,k\}$. Since $\dom\alpha\subseteq\dom\gamma$, $I_i$ is also an interval of $\dom\gamma$,
whence $I_i\gamma$ is an interval of $\dom\beta$ and so $I_i\gamma\subseteq I_j'$, for some $j \in \{1,\ldots,l\}$.
It follows that $I_i\alpha=I_i\gamma\beta\subseteq I_j'\beta$, for some $j \in \{1,\ldots,l\}$.
Similarly, we may show that, for all $j \in \{1,\ldots,l\}$, there exists $i \in \{1,\ldots,k\}$ such that $I_j'\beta \subseteq I_i\alpha$.
Now, since $\im\alpha = \im\beta$, we may  deduce that $\{I_1\alpha, I_2\alpha, \ldots, I_k\alpha\} = \{I_1'\beta, I_2'\beta, \ldots, I_l'\beta\}$.

\medskip

[$2 \Rightarrow 3$] From $\{I_1\alpha, I_2\alpha, \ldots, I_k\alpha\} = \{I_1'\beta, I_2'\beta, \ldots, I_l'\beta\}$ it follows immediately
that $k=l$ and $\im\alpha = \im\beta$. Let $\sigma$ be the permutation of $\{1,\ldots,k\}$ such that $I_i\alpha = I_{i\sigma}'\beta$, for all $i \in \{1,\ldots,k\}$. Then
$\alpha\beta^{-1} = \left(
                      \begin{array}{cccc}
                        I_1 & I_2 & \cdots & I_k \\
                        I_{1\sigma}' & I_{2\sigma}' & \cdots & I_{k\sigma}' \\
                      \end{array}
                    \right)$
and so  $\alpha\beta^{-1}$ maps maximal intervals of its domain into maximal intervals of its image.
Hence, in order to prove that $\alpha\beta^{-1} \in \PAut(P_n)$,
it suffices to show that $\alpha\beta^{-1}$ is order-preserving or order-reversing in $I_i$,
for $i \in \{1,\ldots,k\}$. Let $i \in \{1,\ldots,k\}$. Then, we have
$
\alpha\beta^{-1}|_{I_i} = \alpha|_{I_i}\beta^{-1}|_{I_i\alpha}=\alpha|_{I_i}\beta^{-1}|_{I_{i\sigma}'\beta}
$
and
$
\beta^{-1}|_{I_{i\sigma}'\beta}= \left(
                      \begin{array}{c}
                       I_{i\sigma}'\beta \\
                       I_{i\sigma}'\\
                      \end{array}
                    \right)
$.
As $I_i$ is an interval, $\alpha|_{I_i}$  is order-preserving or order-reversing. On the other hand,
as $I_{i\sigma}'$ is an interval, $\beta|_{I_{i\sigma}'}$ is order-preserving or order-reversing and so its inverse mapping
$\beta^{-1}|_{I_{i\sigma}'\beta}$ is also order-preserving or order-reversing. Thus, $\alpha\beta^{-1}$ is order-preserving or order-reversing in $I_i$,
as required.

\medskip

[$3 \Rightarrow 1$] From $\im\alpha = \im\beta$ and $\alpha\beta^{-1} \in \PAut(P_n)$,
 it follows that $\alpha\beta^{-1},  \beta\alpha^{-1}=(\alpha\beta^{-1})^{-1}\in \IEnd(P_n)$,
 $(\alpha\beta^{-1})\beta = \alpha(\beta^{-1}\beta) = \alpha\id|_{\im\beta} = \alpha\id|_{\im\alpha} = \alpha$
 and $(\alpha\beta^{-1})^{-1}\alpha = (\beta\alpha^{-1})\alpha = \beta(\alpha^{-1}\alpha) = \beta\id|_{\im\alpha} = \beta\id|_{\im\beta} = \beta$,
 whence $\alpha {\cal L} \beta$.
\end{proof}

\begin{proposition}\label{prGR2}
Let $\alpha, \beta \in \IEnd(P_n)$. Then $\alpha {\cal R} \beta$ if and only if $\dom\alpha = \dom\beta$ and $\alpha^{-1}\beta \in \PAut(P_n)$.
\end{proposition}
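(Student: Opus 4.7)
The plan is to mirror the proof of Proposition \ref{prGR1}, and the argument is in fact shorter: because the $\mathcal{R}$-relation in $\I_n$ is already characterized by equality of domains, there is no need for an intermediate ``partition into maximal intervals'' condition, so only two clauses appear and the proof reduces to the formal manipulations analogous to [$3\Rightarrow1$] of Proposition \ref{prGR1}.

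For the direction ($\Rightarrow$), I would assume $\alpha\mathcal{R}\beta$ in $\IEnd(P_n)$ and pick witnesses $\gamma',\delta'\in\IEnd(P_n)$ with $\alpha=\beta\gamma'$ and $\beta=\alpha\delta'$. Since $\IEnd(P_n)\subseteq\I_n$, $\dom\alpha=\dom\beta$ is immediate. To show $\alpha^{-1}\beta\in\PAut(P_n)$, substitute $\beta=\alpha\delta'$ and use $\alpha^{-1}\alpha=\id|_{\im\alpha}$ to rewrite $\alpha^{-1}\beta=\id|_{\im\alpha}\delta'$. Comparing domains via $\dom(\alpha\delta')=\dom\beta=\dom\alpha$ forces $\im\alpha\subseteq\dom\delta'$, so this product is exactly the restriction $\delta'|_{\im\alpha}$. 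The key small observation is that the restriction of an injective partial endomorphism to any subset of its domain is itself an injective partial endomorphism, which gives $\alpha^{-1}\beta\in\IEnd(P_n)$. Symmetrically, $(\alpha^{-1}\beta)^{-1}=\beta^{-1}\alpha=\gamma'|_{\im\beta}\in\IEnd(P_n)$. Hence $\alpha^{-1}\beta$ is an injective partial endomorphism whose inverse is also a partial endomorphism, which is precisely the definition of an element of $\PAut(P_n)$.

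For the direction ($\Leftarrow$), I would assume $\dom\alpha=\dom\beta$ and $\alpha^{-1}\beta\in\PAut(P_n)$. Since $\PAut(P_n)$ is an inverse submonoid of $\I_n$, one also has $\beta^{-1}\alpha=(\alpha^{-1}\beta)^{-1}\in\PAut(P_n)\subseteq\IEnd(P_n)$. Setting $\gamma'=\alpha^{-1}\beta$ and $\delta'=\beta^{-1}\alpha$, I would verify $\alpha\gamma'=(\alpha\alpha^{-1})\beta=\id|_{\dom\alpha}\beta=\beta$ and $\beta\delta'=(\beta\beta^{-1})\alpha=\id|_{\dom\beta}\alpha=\alpha$, where the last equalities use $\dom\alpha=\dom\beta$. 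This yields $\alpha\mathcal{R}\beta$ in $\IEnd(P_n)$.

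I do not anticipate any real obstacle: the argument is essentially formal, relying only on the inverse-semigroup identities in $\I_n$, closure of $\IEnd(P_n)$ under restriction to subsets of the domain, and closure of $\PAut(P_n)$ under taking inverses. The only step warranting a moment's care is the identification of $\alpha^{-1}\beta$ with the restriction $\delta'|_{\im\alpha}$.
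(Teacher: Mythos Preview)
Your proposal is correct and follows essentially the same approach as the paper's own proof: both directions are handled by the same formal manipulations in $\I_n$, identifying $\alpha^{-1}\beta$ (and its inverse) with restrictions of the $\mathcal{R}$-witnesses and invoking closure of $\IEnd(P_n)$ under restriction, then reversing via $\alpha(\alpha^{-1}\beta)=\beta$ and $\beta(\beta^{-1}\alpha)=\alpha$. Your remark that $\im\alpha\subseteq\dom\delta'$ is needed for the restriction identity is a small extra bit of care, but otherwise the arguments coincide.
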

\begin{proof}
Suppose that $\alpha {\cal R} \beta$. Then $\dom\alpha = \dom\beta$. Moreover, there exist transformations $\gamma, \delta \in \IEnd(P_n)$
such that $\beta = \alpha\gamma$ and $\alpha = \beta\delta$. Then, we have
$\alpha^{-1}\beta = \alpha^{-1}\alpha\gamma = \id|_{\im\alpha}\gamma = \gamma|_{\im\alpha}$
and
$(\alpha^{-1}\beta)^{-1}=\beta^{-1}\alpha = \beta^{-1}\beta\delta = \id|_{\im\beta}\delta = \delta|_{\im\beta}$.
Since, clearly, any restriction of a transformation of $\IEnd(P_n)$ is still a transformation of $\IEnd(P_n)$,
we have $\alpha^{-1}\beta, \beta^{-1}\alpha\in\IEnd(P_n)$ and so $\alpha^{-1}\beta \in \PAut(P_n)$.

Conversely,  admit that $\dom\alpha = \dom\beta$ and $\alpha^{-1}\beta \in \PAut(P_n)$.
Then  $\alpha^{-1}\beta,\beta^{-1}\alpha=(\alpha^{-1}\beta)^{-1} \in \IEnd(P_n)$,
$\beta = \id|_{\dom\beta}\beta = \id|_{\dom\alpha}\beta = (\alpha\alpha^{-1})\beta = \alpha(\alpha^{-1}\beta)$
and
$\alpha = \id|_{\dom\alpha}\alpha = \id|_{\dom\beta}\alpha = (\beta\beta^{-1})\alpha = \beta(\beta^{-1}\alpha)$,
whence $\alpha {\cal R} \beta$, as required.
\end{proof}

Since ${\cal H} = {\cal R} \cap {\cal L}$, it follows immediately that:

\begin{corollary}\label{coGR1}
Let $\alpha, \beta \in \IEnd(P_n)$. Then $\alpha {\cal H} \beta$ if and only if $\dom\alpha = \dom\beta$,
$\im\alpha = \im \beta$ and $\alpha^{-1}\beta, \alpha\beta^{-1} \in \PAut(P_n)$.
\end{corollary}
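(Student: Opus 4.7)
The plan is to derive this corollary directly from the two preceding propositions, since by definition $\mathcal{H} = \mathcal{L} \cap \mathcal{R}$. The strategy is simply to intersect the characterization of $\mathcal{L}$ from the third equivalent condition in Proposition \ref{prGR1} with the characterization of $\mathcal{R}$ in Proposition \ref{prGR2}, and verify that the resulting list of conditions matches the one in the corollary.

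More concretely, I would first note that $\alpha \mathcal{H} \beta$ holds if and only if both $\alpha \mathcal{L} \beta$ and $\alpha \mathcal{R} \beta$ hold. Applying Proposition \ref{prGR1} (equivalence $1 \Leftrightarrow 3$), the first is equivalent to $\im \alpha = \im \beta$ together with $\alpha \beta^{-1} \in \PAut(P_n)$. Applying Proposition \ref{prGR2}, the second is equivalent to $\dom \alpha = \dom \beta$ together with $\alpha^{-1} \beta \in \PAut(P_n)$. Conjoining these four conditions gives precisely the characterization claimed in the corollary, so both implications follow at once.

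There is essentially no obstacle: no intermediate lemma is needed, no case analysis on intervals is required, and the inverse-semigroup machinery has already been absorbed into the two propositions. The only thing worth mentioning, if one wishes to be careful, is that the two characterizations are conjoined without redundancy — the condition $\alpha \beta^{-1} \in \PAut(P_n)$ is genuinely different from $\alpha^{-1} \beta \in \PAut(P_n)$, so both must be retained in the statement. Accordingly, the proof can be written in just one or two sentences that cite Propositions \ref{prGR1} and \ref{prGR2} and invoke the definition $\mathcal{H} = \mathcal{L} \cap \mathcal{R}$.
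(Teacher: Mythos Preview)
Your proposal is correct and matches the paper's approach exactly: the paper simply remarks ``Since ${\cal H} = {\cal R} \cap {\cal L}$, it follows immediately that'' before stating the corollary, which is precisely the combination of Propositions~\ref{prGR1} and~\ref{prGR2} that you describe.
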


\medskip

Before presenting our descriptions of the Green's relation $\mathcal{J}$ on $\IEnd(P_{n})$ and on $\PAut(P_{n})$, we need to introduce some notions and notations.

\smallskip

For $A, B \subseteq \N$, denote by $A < B$ if $a < b$ for all $a \in A$ and $b \in B$.

\smallskip

Let $a=(a_1,\ldots,a_p)$ be a sequence of elements of $\N$. We define the \textit{reverse of $a$} as being the sequence $a^R=(a_p,\ldots,a_1)$.

\smallskip

Let $\alpha \in \IEnd(P_n)$ and let $J$ be a maximal interval of $\im\alpha$.
Define the \textit{type of $J$} to be the sequence $\tau_\alpha(J) = (|I_{1}|, |I_{2}|, \ldots, |I_{p}|)$,
where $\{I_{1}, I_{2}, \ldots, I_{p}\}$ are the maximal intervals of $J\alpha^{-1}$ such that $I_{i}\alpha < I_{i+1}\alpha$, for $1 \leq i < p$.

Now, let $\alpha,\beta \in \IEnd(P_n)$. We say that $\alpha$ and $\beta$ have \textit{similar type} if there
exists a bijection $\sigma$ from the set of maximal intervals of $\im \alpha$ into the set of maximal intervals of $\im \beta$ such that
$\tau_{\alpha}(J) \in \{\tau_{\beta}(J\sigma), \tau_{\beta}(J\sigma)^{R}\}$, for any maximal interval $J$ of $\im \alpha$.

Observe that two elements $\alpha$ and $\beta$ of  $\IEnd(P_n)$ have similar type if and only if
they have maximal intervals of their images with the same type up to reversion and the same number of occurrences.

\begin{lemma}\label{leGR1}
Let $\alpha, \beta \in \IEnd(P_{n})$ be such that $\alpha$ and $\beta$ have
similar type. Then, there exist $\gamma, \delta \in \PAut(P_{n})$ such that
$\beta = \gamma \alpha \delta$ and $\alpha = \gamma^{-1} \beta \delta^{-1}$.
\end{lemma}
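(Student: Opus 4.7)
My plan is to construct $\gamma$ and $\delta$ explicitly, piece-by-piece over the maximal intervals provided by the similar-type bijection $\sigma$. For each maximal interval $J$ of $\im\alpha$, let $I_1,\ldots,I_p$ be the maximal intervals of $J\alpha^{-1}$ ordered so that $I_i\alpha<I_{i+1}\alpha$, and let $I_1',\ldots,I_p'$ be the analogous intervals of $(J\sigma)\beta^{-1}$; by hypothesis, $\tau_\alpha(J)=(|I_1|,\ldots,|I_p|)$ equals either $\tau_\beta(J\sigma)$ or its reverse, so in particular the two sequences have the same length $p$. A preliminary observation I would record is that, because $\alpha$ is injective and $\alpha|_{I_i}$ is order-preserving or order-reversing on each interval $I_i$, the images $I_1\alpha,\ldots,I_p\alpha$ form a partition of $J$ into consecutive subintervals of sizes $|I_1|,\ldots,|I_p|$ (left-to-right), and similarly for the $I_j'\beta$ inside $J\sigma$.

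I first build $\delta$ on each maximal interval $J$ of $\im\alpha$: take $\delta|_J$ to be the order-preserving bijection $J\to J\sigma$ if $\tau_\alpha(J)=\tau_\beta(J\sigma)$, and the order-reversing bijection if $\tau_\alpha(J)=\tau_\beta(J\sigma)^R$ (when both hold either choice works and gives the same matching). By the partition observation, $\delta|_J$ then carries $I_i\alpha$ onto $I_i'\beta$ in the first case and onto $I_{p-i+1}'\beta$ in the second. The last two bullet observations preceding the lemma give $\delta\in\PAut(P_n)$. Next I define $\gamma$ on each $I_j'$ to be the bijection onto intervals of equal length $I_j$ (first case) or $I_{p-j+1}$ (second case), choosing the orientation of each piece $\gamma|_{I_j'}$ so that the product of the orientations of $\gamma|_{I_j'}$, $\alpha|_{I_k}$ and $\delta|_{I_k\alpha}$ equals the orientation of $\beta|_{I_j'}$, where $k$ is $j$ or $p-j+1$ as appropriate. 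Each piece is a monotone bijection between intervals of equal length and the pieces are disjoint (since distinct $I_j'$'s are distinct maximal intervals of $\dom\beta$), so by the same bullet observations $\gamma\in\PAut(P_n)$, with $\im\gamma=\dom\alpha$.

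For verification, on each maximal interval $I_j'$ of $\dom\beta$ both $\beta|_{I_j'}$ and $(\gamma\alpha\delta)|_{I_j'}$ are bijections from $I_j'$ onto $I_j'\beta$ with the same orientation, hence equal; thus $\beta=\gamma\alpha\delta$. The companion identity $\alpha=\gamma^{-1}\beta\delta^{-1}$ drops out by canceling $\gamma^{-1}\gamma=\id|_{\dom\alpha}$ and $\delta\delta^{-1}=\id|_{\im\alpha}$, which hold precisely because $\im\gamma=\dom\alpha$ and $\dom\delta=\im\alpha$. The delicate point I expect is the bookkeeping in the reversed-type case: one must check that an order-reversing bijection $J\to J\sigma$ genuinely carries each $I_i\alpha$ onto $I_{p-i+1}'\beta$, which is exactly where the reversed type $\tau_\beta(J\sigma)^R$ in the definition of similar type is used, and which forces the choice $k=p-j+1$ in the definition of $\gamma$.
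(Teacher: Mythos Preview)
Your proof is correct and follows essentially the same approach as the paper: you build $\delta$ as the unique monotone bijection from each maximal interval $J$ of $\im\alpha$ onto $J\sigma$ (order-preserving or order-reversing according to whether the types agree or are reversed), and you build $\gamma$ piece-by-piece on the maximal intervals $I_j'$ of $\dom\beta$, choosing the orientation of each piece so that the composite $\gamma\alpha\delta$ agrees with $\beta$ there. The paper's construction is identical in substance (with $\sigma$ running in the opposite direction and the case split written out via explicit conditions (a) and (b) on orientations), and both proofs recover $\alpha=\gamma^{-1}\beta\delta^{-1}$ from $\im\gamma=\dom\alpha$ and $\dom\delta=\im\alpha$.
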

\begin{proof}
Let $\{J_1,J_2,\ldots,J_k\}$ and $\{J_1',J_2',\ldots,J_k'\}$ be the maximal intervals of $\im \alpha$ and $\im \beta$, respectively.
Then there exist a permutation $\sigma$ of $\{1,\ldots,k\}$ such that $\tau_\beta(J_r') \in \{\tau_\alpha(J_{r\sigma}), \tau_\alpha(J_{r\sigma})^R\}$, for $r = 1,\ldots,k$.

For $1 \leq r \leq k$, let $\{I_{r,1}', I_{r,2}', \ldots, I_{r,p_r}'\}$ and $\{I_{r\sigma,1}, I_{r\sigma,2}, \ldots, I_{r\sigma,p_r}\}$ be the maximal intervals of $J_r'\beta^{-1}$ and $J_{r\sigma}\alpha^{-1}$, respectively, such that $I_{r,i}'\beta < I_{r,i+1}'\beta$ and $I_{r\sigma,i}\alpha < I_{r\sigma,i+1}\alpha$, for all $1 \leq i < p_r$.
Moreover, let $J_{r,i}' = I_{r,i}'\beta$ and $J_{r\sigma,i} = I_{r\sigma,i}\alpha$, for $r=1,\ldots,k$ and $i=1,\ldots,p_r$.
Clearly, $J_{r}' = J_{r,1}' \cup J_{r,2}' \cup \cdots \cup J_{r,p_r}'$ and $J_{r\sigma} = J_{r\sigma,1} \cup J_{r\sigma,2} \cup \cdots \cup J_{r\sigma,p_r}$.

\smallskip

Let $r=1,\ldots,k$. We define partial transformations $\gamma_{r}$ and $\delta_{r}$ as following:
\begin{itemize}
\item $\dom\gamma_{r}=\cup\{I'_{r,1},I'_{r,2}, \ldots, I'_{r,p_{r}}\}=J'_r\beta^{-1}$;

\item $\dom\delta_{r}=\cup\{J_{r\sigma,1},J_{r\sigma,2},\ldots, J_{r\sigma,p_{r}}\}=J_{r\sigma}$;

\item $I'_{r,i}\gamma_{r} = \left\{
\begin{array}{ll}
I_{r\sigma,i} & \text{if } \tau_{\beta}(J'_{r})=\tau_{\alpha}(J_{r\sigma}) \\
I_{r\sigma,p_{r}-i+1} & \text{if } \tau_{\beta}(J'_{r})=\tau_{\alpha}(J_{r\sigma})^R,
\end{array}
\right.$

for $i=1,\ldots,p_{r}$;

\item $J_{r\sigma,i}\delta_{r} = \left\{
\begin{array}{ll}
J'_{r,i} & \text{if } \tau_{\beta}(J'_{r})=\tau_{\alpha}(J_{r\sigma}) \\
J'_{r,p_{r}-i+1} & \text{if } \tau_{\beta}(J'_{r})=\tau_{\alpha}(J_{r\sigma})^R,
\end{array}
\right.$

for $i=1,\ldots,p_{r}$;

\item $\gamma_{r}|_{I'_{r,i}}$ is $\left\{
\begin{array}{ll}
\text{order-preserving} &  \text{if (a) or (b) is satisfied} \\
\text{order-reversing} &  \text{otherwise},
\end{array}%
\right.$\\
where
\begin{description}
  \item (a) $\tau_{\beta}(J'_{r}) = \tau_{\alpha}(J_{r\sigma})$ and $\alpha |_{I_{r\sigma,i}}$ and $\beta |_{I'_{r,i}}$ are both order-preserving or
both order-reversing, and
  \item (b) $\tau_{\beta}(J'_{r}) = \tau_{\alpha}(J_{r\sigma})^{R}$ and $\alpha |_{I_{r\sigma,p_{r}-i+1}}$ is order-preserving and $\beta |_{I'_{r,i}}$  is
order-reversing or vice versa,
\end{description}
for $i=1,\ldots,p_{r}$;

\item $\delta_{r}|_{J_{r\sigma}}$ is $\left\{
\begin{array}{ll}
\text{order-preserving} & \text{if } \tau_{\beta}(J'_{r}) = \tau_{\alpha}(J_{r\sigma}) \\
\text{order-reversing} & \text{if } \tau_{\beta}(J'_{r}) = \tau_{\alpha}(J_{r\sigma})^{R}.
\end{array}
\right.$
\end{itemize}

It is easy to verify that both $\gamma_{r}$ and $\delta_{r}$ are well defined.
Then, we define partial transformations $\gamma$ and $\delta$ as
following:

\begin{itemize}
\item $\dom\gamma = \cup\{I'_{1,1},\ldots, I'_{1,p_{1}},\ldots, I'_{k,1}, \ldots, I'_{k,p_{k}}\}=\dom\beta$;

\item $\dom\delta = \cup\{J_{1\sigma,1},\ldots, J_{1\sigma,p_{1}},\ldots, J_{k\sigma,1},\ldots, J_{k\sigma,p_{k}}\}=\im\alpha$;

\item $\gamma |_{I'_{r,s}} = \gamma_{r} |_{I'_{r,s}}$ for $r=1,\ldots,k$ and $s=1,\ldots,p_{r};$

\item $\delta |_{J_{r\sigma,s}} = \delta_{r} |_{J_{r\sigma,s}}$ for $r=1,\ldots,k$ and $s=1,\ldots,p_{r}$.
\end{itemize}

Clearly, both transformations $\gamma$ and $\delta$ are partial automorphisms. Let $r=1,\ldots,k$ and $s=1,\ldots,p_{r}$.
Then
$$
I'_{r,s}\gamma \alpha \delta =\left\{
\begin{array}{ll}
I_{r\sigma,s}\alpha\delta = J_{r\sigma,s}\delta = J'_{r,s} = I'_{r,s}\beta & \text{if } \tau_{\beta}(J'_{r}) = \tau_{\alpha}(J_{r\sigma}) \\
I_{r\sigma,p_{r}-s+1}\alpha\delta = J_{r\sigma, p_{r}-s+1}\delta = J'_{r,s} = I'_{r,s}\beta & \text{if } \tau_{\beta}(J'_{r}) = \tau_{\alpha}(J_{r\sigma})^{R}.
\end{array}\right.
$$
Taking into account (a) and (b), we can deduce that $\gamma \alpha \delta |_{I'_{r,s}}$ is order-preserving if $\beta |_{I'_{r,s}}$ is
order-preserving and $\gamma \alpha \delta |_{I'_{r,s}}$ is order-reversing if $\beta |_{I'_{r,s}}$ is order-reversing, which allows us to conclude
that $\beta = \gamma \alpha \delta$.
On the other hand,
since $\im\gamma = \dom\alpha$ and $\im\alpha = \dom\delta$, we obtain
$\gamma^{-1}\gamma\alpha\delta\delta^{-1} = \id|_{\dom\alpha}\alpha\id|_{\im\alpha} = \alpha$
and so we also have $\alpha = \gamma^{-1}\beta\delta^{-1}$, as required.
\end{proof}

Now, we can describe the Green's relation $\cal J$ for the monoid $\IEnd(P_n)$.

\begin{proposition}\label{prGR3}
Let $\alpha, \beta \in \IEnd(P_n)$. Then $\alpha {\cal J} \beta$ if and only if $\alpha$ and $\beta$ have similar type.
\end{proposition}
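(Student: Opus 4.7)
The reverse implication is immediate from Lemma~\ref{leGR1}: if $\alpha$ and $\beta$ have similar type, that lemma furnishes $\gamma,\delta\in\PAut(P_n)\subseteq\IEnd(P_n)$ with $\beta=\gamma\alpha\delta$ and $\alpha=\gamma^{-1}\beta\delta^{-1}$, so $\alpha\,\mathcal{J}\,\beta$ in $\IEnd(P_n)$.

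For the direct implication, the plan is to exploit that $\IEnd(P_n)$ is finite, hence $\mathcal{J}=\mathcal{D}=\mathcal{L}\circ\mathcal{R}$, so there exists $\mu\in\IEnd(P_n)$ with $\alpha\,\mathcal{L}\,\mu\,\mathcal{R}\,\beta$. Since ``similar type'' is manifestly an equivalence relation (reflexivity via the identity bijection, symmetry via the inverse bijection and $(\tau^R)^R=\tau$, transitivity by composing bijections), it is enough to show separately that $\mathcal{L}$-related and $\mathcal{R}$-related elements of $\IEnd(P_n)$ always have similar type.

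For the $\mathcal{L}$-case I would invoke condition (2) of Proposition~\ref{prGR1}: $\im\alpha=\im\mu$ and the set $\{I_i\alpha\}$ of images of the maximal intervals of $\dom\alpha$ coincides with the set $\{I_j'\mu\}$ of images of the maximal intervals of $\dom\mu$. Fixing a maximal interval $J$ of the common image, any maximal interval $I$ of $\dom\alpha$ meeting $J\alpha^{-1}$ must satisfy $I\alpha\subseteq J$, since $I\alpha$ is an interval of $\im\alpha$ meeting the maximal interval $J$; hence the maximal intervals of $J\alpha^{-1}$ are precisely those $I_i$ with $I_i\alpha\subseteq J$, and the same holds for $\mu$. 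The set equality $\{I_i\alpha\}=\{I_j'\mu\}$ then forces the two resulting interval partitions of $J$ to coincide, giving $\tau_\alpha(J)=\tau_\mu(J)$; similar type follows with the identity bijection on the maximal intervals of $\im\alpha=\im\mu$.

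For the $\mathcal{R}$-case, by Proposition~\ref{prGR2} we have $\dom\mu=\dom\beta$ and $\phi:=\mu^{-1}\beta\in\PAut(P_n)$, with $\dom\phi=\im\mu$ and $\im\phi=\im\beta$. Being a partial automorphism, $\phi$ maps each maximal interval $J$ of $\im\mu$ to a maximal interval $J\phi$ of $\im\beta$, so $\sigma\colon J\mapsto J\phi$ is the required bijection. Using $(J\phi)\beta^{-1}=J\mu^{-1}\beta\beta^{-1}=J\mu^{-1}$ (since $J\mu^{-1}\subseteq\dom\mu=\dom\beta$), the maximal intervals $\{I_1,\ldots,I_p\}$ of $J\mu^{-1}$ coincide as a set with those of $(J\sigma)\beta^{-1}$, so the two types $\tau_\mu(J)$ and $\tau_\beta(J\sigma)$ are built from the very same sizes $|I_i|$. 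The main subtlety, and the point that requires real care, is that $\phi|_J$ may be either order-preserving or order-reversing: in the first case $I_i\beta=I_i\mu\phi$ appears in $J\sigma$ in the same order as $I_i\mu$ does in $J$, giving $\tau_\beta(J\sigma)=\tau_\mu(J)$; in the second the order is reversed, giving $\tau_\beta(J\sigma)=\tau_\mu(J)^R$. This is precisely where the reversal option built into the definition of similar type is essential. Transitivity then transfers similar type from $\alpha\sim\mu$ and $\mu\sim\beta$ to $\alpha\sim\beta$, finishing the argument.
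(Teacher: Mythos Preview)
Your argument is correct and follows essentially the same route as the paper: both use finiteness to get $\mathcal{J}=\mathcal{D}$, pick an intermediate element ($\mu$ for you, $\gamma$ in the paper) with $\alpha\,\mathcal{L}\,\mu\,\mathcal{R}\,\beta$, invoke Propositions~\ref{prGR1} and~\ref{prGR2}, and use the partial automorphism $\mu^{-1}\beta$ to build the bijection $\sigma$ between maximal intervals of the images. The only cosmetic difference is that you split the forward direction into an $\mathcal{L}$-step (getting $\tau_\alpha(J)=\tau_\mu(J)$ exactly, via condition~(2) of Proposition~\ref{prGR1}) and an $\mathcal{R}$-step, linking them by transitivity of ``similar type''; the paper instead uses condition~(3) of Proposition~\ref{prGR1} (namely $\alpha\gamma^{-1}\in\PAut(P_n)$) and composes the two steps into a single computation $(J\sigma)\beta^{-1}=(J\alpha^{-1})\alpha\gamma^{-1}$, reading off the type relation in one stroke.
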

\begin{proof}
Let $\alpha, \beta \in \IEnd(P_n)$ be such that $\alpha {\cal J} \beta$. Then, there exists $\gamma \in \IEnd(P_n)$ such that $\alpha {\cal L} \gamma {\cal R} \beta$ and so, by Propositions \ref{prGR1} and \ref{prGR2}, we have $\dom\gamma=\dom\beta$ and $\im\gamma=\im\alpha$ and
$\alpha\gamma^{-1}, \gamma^{-1}\beta \in \PAut(P_n)$. In addition, $\alpha^{-1}\alpha\gamma^{-1}=\gamma^{-1} = \gamma^{-1}\beta\beta^{-1}$.
Moreover, $\dom(\gamma^{-1}\beta)=\im\alpha$ and $\im(\gamma^{-1}\beta)=\im\beta$. Hence, $\gamma^{-1}\beta\in\PAut(P_n)$
maps each maximal interval $J$ of $\im\alpha$ into a maximal interval $J\gamma^{-1}\beta$ of $\im\beta$, thus defining a bijection $\sigma$ ($J\mapsto J\sigma= J\gamma^{-1}\beta$) from the set of maximal intervals of $\im\alpha$ into the set of maximal intervals of $\im\beta$.
Let $J$ be a maximal interval of $\im\alpha$.
Then $(J\sigma)\beta^{-1}=J\gamma^{-1}\beta\beta^{-1}=J\gamma^{-1}=J\alpha^{-1}\alpha\gamma^{-1}=(J\alpha^{-1})\alpha\gamma^{-1}$.
Since $\alpha\gamma^{-1}\in\PAut(P_n)$, we may deduce that $\tau_{\alpha}(J)\in \{\tau_{\beta}(J\sigma), \tau_{\beta}(J\sigma)^{R}\}$.
Therefore $\alpha$ and $\beta$ have similar type.

Conversely, let $\alpha, \beta \in \IEnd(P_{n})$ be such that $\alpha$ and $\beta$ have
similar type. Then,  by Lemma \ref{leGR1}, we have directly $\alpha \mathcal{J} \beta$, as required.
\end{proof}

We finish this section with the description of the Green's relation $\mathcal{J}$ of $\PAut(P_n)$, which follows immediately from Lemma \ref{leGR1} and
Proposition \ref{prGR3}.

\begin{corollary}
Let $\alpha,\beta \in \PAut(P_n)$. Then $\alpha \mathcal{J} \beta$ if
and only if $\alpha$ and $\beta$ have similar type.
\end{corollary}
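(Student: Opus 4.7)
The plan is to deduce this immediately from the two results the authors explicitly cite, checking only that everything happens inside $\PAut(P_n)$ rather than merely inside the larger monoid $\IEnd(P_n)$.

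For the direct implication, I would observe that $\PAut(P_n)$ is a submonoid of $\IEnd(P_n)$: any factorizations $\alpha=\gamma\beta\gamma'$ and $\beta=\delta\alpha\delta'$ with $\gamma,\gamma',\delta,\delta'\in\PAut(P_n)$ that witness $\alpha\,\mathcal{J}\,\beta$ in $\PAut(P_n)$ also witness $\alpha\,\mathcal{J}\,\beta$ in $\IEnd(P_n)$. Proposition \ref{prGR3} then yields that $\alpha$ and $\beta$ have similar type.

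For the converse, I would apply Lemma \ref{leGR1} verbatim. Since $\alpha,\beta\in\PAut(P_n)\subseteq\IEnd(P_n)$ are assumed to have similar type, the lemma delivers $\gamma,\delta\in\PAut(P_n)$ with $\beta=\gamma\alpha\delta$ and $\alpha=\gamma^{-1}\beta\delta^{-1}$. As $\PAut(P_n)$ is an inverse submonoid of $\I_n$, we also have $\gamma^{-1},\delta^{-1}\in\PAut(P_n)$, so these four partial automorphisms witness $\alpha\,\mathcal{J}\,\beta$ directly inside $\PAut(P_n)$.

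There is essentially no obstacle, since the substantive content — producing the conjugators inside $\PAut(P_n)$ rather than only inside $\IEnd(P_n)$ — was already absorbed into the statement of Lemma \ref{leGR1}. The only subtle point worth flagging in the write-up is that $\mathcal{J}$-relatedness depends on the ambient semigroup, which is why the forward direction needs the trivial remark that $\PAut(P_n)\subseteq\IEnd(P_n)$ and the backward direction needs the conjugators from the lemma to happen to lie in $\PAut(P_n)$; both checks are immediate.
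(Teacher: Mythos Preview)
Your proposal is correct and matches the paper's approach exactly: the paper states that the corollary ``follows immediately from Lemma~\ref{leGR1} and Proposition~\ref{prGR3}'', and your write-up simply spells out the two immediate checks (forward via the submonoid inclusion and Proposition~\ref{prGR3}, backward via the $\PAut(P_n)$-valued conjugators from Lemma~\ref{leGR1}).
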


Observe that the type of a maximal interval of the image of an element of $\PAut(P_n)$ is always a unitary sequence which we can identify with the size of the interval taken. Therefore, two elements $\alpha$ and $\beta$ of $\PAut(P_n)$  have similar type if and only if
they have maximal intervals of their images with the same size and with the same number of occurrences.

\section{Cardinality}

Let $n\in \mathbb{N}$ and $\overline{n}=\{1,\ldots,n\}$.
We will determine the cardinality of $\PAut(P_n)$ as well as of $\IEnd(P_n)$. For this, we need some technical notations.

Let $A\in \{0,1\}^{n}$ and let $A(p)$ denotes the element on the position $p$ in $A$. Further, let $A(0)=A(n+1)=0$.

Let $R_{A}=\{p\in \overline{n} \mid A(p-1)=0$ and $A(p)=1\}$ and $r_{A}=\left\vert R_{A}\right\vert$.

Let $s_{A}=\underset{p=1}{\overset{n}{\sum}}A(p)$.

Let $z(1)=1$, $z(2)=r_{A}$ and
$$
q_{A,i}=\left\{
\begin{array}{cl}
\left(
\begin{array}{c}
n-s_{A}+z(i) \\
r_{A}%
\end{array}%
\right)  & \text{if  $A(p)\neq 0$, for some $p \in \overline{n}$} \\
1 & \text{otherwise},
\end{array}
\right.
$$
for $i\in \{1,2\}$.

Let $t_{A,i}=(r_{A}!)q_{A,i}$, for $i\in \{1,2\}$ and let $T_{A}=\left\vert \{p\in R_{A} \mid A(p+1)=1\}\right\vert$.

\begin{theorem}\label{prC1}
One has
~$\left\vert \PAut(P_{n})\right\vert ={\!\!\!\!\displaystyle\sum_{A\in \{0,1\}^{n}} 2^{T_{A}}t_{A,1}}$~
and
~$\left\vert \IEnd(P_{n})\right\vert ={\!\!\!\!\displaystyle\sum_{A\in \{0,1\}^{n}} 2^{T_{A}}t_{A,2}}$.
\end{theorem}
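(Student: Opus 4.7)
The plan is to partition both monoids according to $\dom\alpha$, identifying each possible domain $D\subseteq\overline{n}$ with its characteristic vector $A\in\{0,1\}^n$. Under this encoding, $R_A$ is the set of left endpoints of the maximal intervals of $D$, $r_A$ is their number, $s_A=|D|$, and $T_A$ counts those maximal intervals of length at least $2$ (since an interval starting at $p\in R_A$ has length $\geq 2$ precisely when $A(p+1)=1$). It then suffices to show that for each $A$ the number of elements of the relevant monoid with domain $D$ equals $2^{T_A}t_{A,i}$, with $i=1$ for $\PAut(P_n)$ and $i=2$ for $\IEnd(P_n)$; the all-zero $A$ corresponds to the empty partial transformation and the formula evaluates to $1$ via the second branch in the definition of $q_{A,i}$.

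Fix $A\neq 0$ and let the maximal intervals of $D$ in increasing order be $I_1<\cdots<I_{r_A}$ with $|I_i|=\ell_i$. Using the bulleted characterisations at the start of Section~1, specifying $\alpha$ with $\dom\alpha=D$ is equivalent to choosing, for each $i$, an image interval $J_i:=I_i\alpha$ of length $\ell_i$ and an orientation (order-preserving or order-reversing) for $\alpha|_{I_i}$, subject to the $J_i$ being pairwise disjoint (forced by injectivity) and, in the $\PAut(P_n)$ case only, each $J_i$ being a maximal interval of $\im\alpha=\bigcup_iJ_i$. The orientation choice is vacuous when $\ell_i=1$, so the overall orientation factor is $2^{T_A}$.

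The admissible families $(J_1,\ldots,J_{r_A})$ I would enumerate in two stages. First, list the image intervals in their natural left-to-right order as $J_1'<\cdots<J_{r_A}'$ and choose a bijection $\pi\colon\{1,\ldots,r_A\}\to\{1,\ldots,r_A\}$ sending each source block $I_i$ to a target slot $J_{\pi(i)}'$; this pins down the sequence of slot sizes in $r_A!$ ways. Second, place the resulting $r_A$ ordered slots inside $\overline{n}$. For $\PAut(P_n)$ the slots must be separated by at least one empty cell (otherwise two $J_i$'s would merge into a single maximal interval of $\im\alpha$, contradicting the fourth bulleted observation), and a standard stars-and-bars count yields $\binom{n-s_A+1}{r_A}=q_{A,1}$ placements. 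For $\IEnd(P_n)$ the slots need only be disjoint, adjacency being allowed (adjacent $J_i$'s simply coalesce into one maximal interval of the image without violating the interval-image condition), giving $\binom{n-s_A+r_A}{r_A}=q_{A,2}$ placements. Multiplying the three factors produces $r_A!\,q_{A,i}\,2^{T_A}=2^{T_A}t_{A,i}$, and summing over $A$ yields both formulas.

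The main point to verify carefully is that this decomposition (domain, block-to-slot bijection, slot-placement, per-block orientation) sets up a genuine bijection between admissible tuples and the monoid elements with domain $D$: distinct tuples must produce distinct $\alpha$, and every $\alpha$ must be reconstructible. Distinctness is easy since the tuple recovers $\alpha$ on each $I_i$; surjectivity requires invoking the third and fifth bulleted observations of Section~1 to confirm that each constructed $\alpha$ actually lies in $\IEnd(P_n)$, respectively in $\PAut(P_n)$. The delicate step I anticipate is a clean uniform treatment of the $A=0$ edge case together with the stars-and-bars counts for small $r_A$; beyond that, the argument is essentially a combinatorial unpacking of the structural characterisations already available.
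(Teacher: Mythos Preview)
Your proposal is correct and follows essentially the same approach as the paper: both partition by domain via characteristic vectors, decompose the domain into its $r_A$ maximal intervals, and count the maps with that domain as (permutation of image blocks) $\times$ (placement of the ordered blocks in $\overline{n}$ via stars-and-bars, with or without mandatory separating gaps) $\times$ (orientation factor $2^{T_A}$), handling $A=0$ separately. Your write-up is in fact slightly more explicit than the paper's about why the decomposition is a genuine bijection and about invoking the structural characterisations from Section~1.
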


\begin{proof}
The domain of an injective endomorphism on $P_{n}$ is a subset of $\overline{n}$. For each $A\in \{0,1\}^{n}$,
let $A^{\ast}$ be the subset of $\overline{n}$  with $x\in A^{\ast}$ if and only if $A(x)=1$. In
particular, by $A\longmapsto A^{\ast}$, a bijection between $\{0,1\}^{n}$
and the powerset of $\overline{n}$, i.e. between $\{0,1\}^{n}$ and the
possible domains of injective endomorphisms on $P_{n}$, is given. Let $A\in \{0,1\}^{n}$.

\smallskip

First, we suppose that $A\neq (0,0,\ldots,0)$. Then $A^{\ast}$ consists of $r_{A}$ maximal intervals $A_{1}<A_{2}<\cdots <A_{r_{A}}$
of $A^{\ast}$. For $i\in \{1,\ldots,r_{A}\}$, let $p_{i}$ be the minimal element in the set $A_{i}$. So, we
have $A(p_{i}-1)=0$ and $A(p_{i})=1$, for $i\in \{1,\ldots,r_{A}\}$.
This provides $R_{A}=\{p_{i} \mid i\in \{1,\ldots,r_{A}\}\}$.
Moreover, we have $s_{A}=\left\vert A^{\ast}\right\vert$.

An injective endomorphism on $P_{n}$ with domain $A^{\ast}$ has the
form
$$
\left(
\begin{array}{cccc}
A_{1} & A_{2} & \cdots  & A_{r_{A}} \\
B_{1} & B_{2} & \cdots  & B_{r_{A}}%
\end{array}%
\right),
$$
where $B_{1}, \ldots, B_{r_{A}}$ are intervals. We observe that for each permutation $\sigma$ on $\{1,\ldots,r_{A}\}$, there is a possible
image sequence $B_{1}, \ldots, B_{r_{A}}$ such that $B_{1\sigma} < B_{2\sigma} < \cdots < B_{r_{A}\sigma}$, i.e. there are $r_{A}!$ possibilities in which the
intervals $B_{1}, \ldots, B_{r_{A}}$ are ordered.
If the image sequence $B_{1}, \ldots, B_{r_{A}}$ is ordered by $B_{1\sigma} < B_{2\sigma} < \cdots < B_{r_{A}\sigma}$, for some permutation $\sigma$ on
$\{1,\ldots,r_{A}\}$, then there are still $n-s_{A}$ elements being not in the image of an injective endomorphism. If we restricted us to partial
automorphisms then there are $b_{1}, \ldots, b_{r_{A}-1} \in \overline{n}$ such that
$B_{1\sigma} < b_{1} < B_{2\sigma} < b_{2} < \cdots < b_{r_{A}-1} < B_{r_{A}\sigma}$ and so there are still $n-s_{A}-r_{A}+1$ elements being not in the image of a partial automorphism.
These remaining elements can be distributed before or after the $B_{i}$'s, i.e. at $r_{A}+1$ places. The number of all these possibilities is
$$\left(
\begin{array}{c}
(r_{A}+1)+(n-s_{A})-1 \\
n-s_{A}%
\end{array}%
\right) =\left(
\begin{array}{c}
r_{A}+n-s_{A} \\
n-s_{A}%
\end{array}%
\right) =\left(
\begin{array}{c}
r_{A}+n-s_{A} \\
r_{A}%
\end{array}%
\right) =q_{A,2}$$
for injective endomorphism and
$$\left(
\begin{array}{c}
(r_{A}+1)+(n-s_{A}-r_{A}+1)-1 \\
n-s_{A}-r_{A}+1%
\end{array}%
\right) =\left(
\begin{array}{c}
n-s_{A}+1 \\
n-s_{A}-r_{A}+1%
\end{array}%
\right) =\left(
\begin{array}{c}
n-s_{A}+1 \\
r_{A}%
\end{array}%
\right) =q_{A,1}$$
if we only consider partial automorphisms.
In other words, there are $q_{A,2}(r_{A}!)=t_{A,2}$ and
$q_{A,1}(r_{A}!)=t_{A,1}$ possibilities for the intervals $B_{1},\ldots,B_{r_{A}}$, whenever $A^{\ast}$ (with the
partition $A_{1}<\cdots <A_{r_{A}})$ is the domain of an injective endomorphism and of a partial automorphism, respectively.
For $i\in \{1,\ldots,r_{A}\}$, if $\left\vert A_{i}\right\vert \geq 2$ then we have to consider two cases, namely
$\left(\begin{array}{c}
A_{i} \\
B_{i}%
\end{array}%
\right)$ is order-preserving or order-reversing. In order to realize it, we consider the cardinality $T_{A}$ of the set
$D_{A}=\{i\in \{1,\ldots,r_{A}\} \mid \left\vert A_{i}\right\vert \geq 2\}$, i.e. $T_{A}=\left\vert D_{A}\right\vert$. So, we have still to consider $2^{T_{A}}$
possibilities, whenever the intervals $B_{1},\ldots,B_{r_{A}}$ are already fixed. Observe that $D_{A}=\{p\in R_{A} \mid A(p+1)=1\}$.

Thus, there are $2^{T_{A}}t_{A,2}$ injective endomorphisms and $2^{T_{A}}t_{A,1}$ partial automorphisms on $P_{n}$ with
domain $A^{\ast}$.

\smallskip

Next, suppose that $A=(0,0,\ldots,0)$. Then, there exists exactly one injective endomorphism on $P_{n}$ with the domain $A^{\ast}=\emptyset$,
namely the empty transformation. In this case, we have $q_{A,1}=q_{A,2}=1$ and $r_{A}=T_{A}=0$. Hence,
$t_{A,1}=t_{A,2}=q_{A,1}(r_{A}!)=1(0!)=1$, $2^{T_{A}}=2^{0}=1$ and $2^{T_{A}}t_{A,1}=2^{T_{A}}t_{A,2}=1$.

\smallskip

Now, we conclude that
$\left\vert \PAut(P_{n})\right\vert=\!\sum_{A\in \{0,1\}^{n}} 2^{T_{A}}t_{A,1}$ and
$\left\vert\IEnd(P_{n})\right\vert=\!\sum_{A\in\{0,1\}^{n}} 2^{T_{A}}t_{A,2}$, as required.
\end{proof}

\section{Generators and Rank}

In this section we present the main results of this paper. We are referring to the calculation of the ranks of $\PAut(P_n)$ and $\IEnd(P_n)$.
In both cases, we proceed by determining a generating set of minimal size.

\smallskip

Clearly, $\PAut(P_{1}) = \{\id,\emptyset\}$ is a generating set of minimal size of $\PAut(P_{1})=\IEnd(P_{1})$, where $\emptyset$ is the empty transformation.
Moreover, it is easy to verify that
$$\mathcal{G} = \left\{\left(
\begin{array}{cc}
1 & 2 \\
2 & 1%
\end{array}%
\right),
\left(
\begin{array}{c}
1 \\
1%
\end{array}%
\right)\right\}$$
is a generating set of minimal size of
$$\PAut(P_{2})=\IEnd(P_{2})=
\left\{ \id,
\left(\begin{array}{cc}
1 & 2 \\
2 & 1%
\end{array}%
\right),
\left(\begin{array}{c}
1 \\
1%
\end{array}%
\right),
\left(\begin{array}{c}
1 \\
2%
\end{array}%
\right),
\left(\begin{array}{c}
2 \\
1%
\end{array}%
\right),
\left(\begin{array}{c}
2 \\
2%
\end{array}%
\right),
\emptyset \right\}.$$
This shows that
$$\rank\PAut(P_{1})=\rank\IEnd(P_{1})=\rank\PAut(P_{2})=\rank\IEnd(P_{2})=2.$$

Next, let $n\geq 3$ and define
$$
\tau = \left(
             \begin{array}{ccccc}
               1 & 2 & \cdots & n-1 & n \\
               n & n-1 & \cdots & 2 & 1 \\
             \end{array}
           \right)
$$
and
$$\alpha_i = \left(
             \begin{array}{ccccccccc}
               1 & 2 & \cdots & i-1 & i+1 & i+2 & \cdots & n-1 & n \\
               1 & 2 & \cdots & i-1 & n & n-1 & \cdots & i+2 & i+1 \\
             \end{array}
           \right),
$$
for $i = 1,2,\ldots,n$.

Let
$$
\mathcal{A} = \left\{
\begin{array}{ll}
\{\tau, \alpha_{1}, \alpha_{2}\} & \text{if } n=3 \\
\{\tau\} \cup \{\alpha_{i} \mid i=1,2,\ldots,n-2\} & \text{if } n \geq 4.
\end{array}%
\right.
$$

\smallskip

First, we will show that $\cal A$ is a generating set of $\PAut(P_{n})$.
To accomplish this aim we start by proving a series of lemmas.

\begin{lemma}\label{leG1}
One has $\{\alpha_i \mid i = n-1,n\} \subseteq \langle \cal A \rangle$.
\end{lemma}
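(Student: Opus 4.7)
The plan is to realize $\alpha_{n-1}$ and $\alpha_n$ as restrictions of the identity and then construct those partial identities from $\mathcal{A}$ using the reversal $\tau$. The key starting observation is that the ``reversing block'' in the formula for $\alpha_i$ is either empty (when $i=n$) or fixes the lone remaining vertex $n$ (when $i=n-1$, since $n+(n-1)+1-n=n$); hence
$$\alpha_n = \id|_{\{1,\ldots,n-1\}} \qquad\text{and}\qquad \alpha_{n-1} = \id|_{\{1,\ldots,n-2,n\}},$$
so both are idempotents, namely the partial identities on $\overline{n}\setminus\{n\}$ and $\overline{n}\setminus\{n-1\}$, respectively.

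Next I would verify two easy facts by direct calculation. First, every $\alpha_i$ (for $1\le i\le n$) is an involution: on the identity block $\{1,\ldots,i-1\}$ this is immediate, while on the reversing block $\{i+1,\ldots,n\}$ the map $j\mapsto n+i+1-j$ is its own inverse and sends $\{i+1,\ldots,n\}$ to itself, so the composition is defined and returns $j$. Thus $\alpha_i^{2}=\id|_{\overline{n}\setminus\{i\}}$, which I shall denote by $e_i$. Second, conjugation by the full reversal $\tau(j)=n+1-j$ acts on these idempotents by $\tau\, e_i\,\tau = e_{n+1-i}$: indeed, the domain of $\tau e_i\tau$ consists of those $j\in\overline{n}$ for which $\tau(j)=n+1-j\ne i$, that is, $j\ne n+1-i$, and on that domain the composite is the identity.

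Combining the two observations yields the lemma. For $n\ge 4$ both $\alpha_1$ and $\alpha_2$ lie in $\mathcal{A}=\{\tau\}\cup\{\alpha_1,\ldots,\alpha_{n-2}\}$, so
$$\alpha_n = e_n = \tau\, e_1\,\tau = \tau\alpha_1^{2}\tau \in \langle\mathcal{A}\rangle \quad\text{and}\quad \alpha_{n-1} = e_{n-1} = \tau\alpha_2^{2}\tau \in \langle\mathcal{A}\rangle.$$
For $n=3$ the element $\alpha_{n-1}=\alpha_2$ is already in $\mathcal{A}=\{\tau,\alpha_1,\alpha_2\}$, and $\alpha_n=\alpha_3=\tau\alpha_1^{2}\tau$ by the same formula. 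The only delicate point is the initial observation that $\alpha_{n-1}$ and $\alpha_n$ degenerate into mere partial identities; once that is noted, every remaining step is routine composition.
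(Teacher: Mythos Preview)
Your proof is correct and follows essentially the same route as the paper: you arrive at the identical relations $\alpha_n=\tau\alpha_1^{2}\tau$ and $\alpha_{n-1}=\tau\alpha_2^{2}\tau$, which is exactly the paper's one-line argument $\alpha_i=\tau\alpha_{n-i+1}^{2}\tau$ for $i=n-1,n$. You simply unpack why these relations hold (identifying $\alpha_{n-1}$ and $\alpha_n$ as partial identities, noting $\alpha_i^{2}=e_i$, and computing $\tau e_i\tau=e_{n+1-i}$), whereas the paper leaves this verification to the reader.
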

\begin{proof}
The proof follows immediately from the relations $\alpha_i = \tau\alpha^2_{n-i+1}\tau$, for $i = n-1,n$.
\end{proof}

Let
$$\alpha_i^* = \left(
             \begin{array}{ccccccccc}
               1 & 2 & \cdots & i-2 & i-1 & i+1 & \cdots & n-1 & n \\
               i-1 & i-2 & \cdots & 2 & 1 & i+1 & \cdots & n-1 & n \\
             \end{array}
           \right),$$
for $i = 1,2,\ldots,n$.

\begin{lemma}\label{leG2}
One has $\alpha_i^* \in \langle \cal A \rangle$, for $i = 1,2,\ldots,n$.
\end{lemma}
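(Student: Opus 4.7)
The plan is to express $\alpha_i^*$ in terms of $\tau$ and the $\alpha_j$'s by a single conjugation. Comparing definitions, $\alpha_{n-i+1}$ fixes the interval $\{1,\ldots,n-i\}$ and reverses $\{n-i+2,\ldots,n\}$ in place, while $\alpha_i^*$ reverses $\{1,\ldots,i-1\}$ in place and fixes $\{i+1,\ldots,n\}$. Since $\tau\colon x\mapsto n-x+1$ is the global reflection, conjugating $\alpha_{n-i+1}$ by $\tau$ should interchange the two sides of the gap and turn the ``reversal of the right piece'' into a ``reversal of the left piece''. I would therefore conjecture and then check the identity
\[
\alpha_i^* \;=\; \tau\,\alpha_{n-i+1}\,\tau, \qquad i=1,\ldots,n.
\]

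The verification is a short direct computation. One evaluates the right-hand side at $x\in \overline{n}$: first $x\tau=n-x+1$, then one splits on whether $n-x+1<n-i+1$ or $n-x+1>n-i+1$, which is precisely the dichotomy $x>i$ versus $x<i$. Plugging in the two rules for $\alpha_{n-i+1}$ and reapplying $\tau$ gives $x\mapsto x$ on $x>i$ and $x\mapsto i-x$ on $x<i$, with $i$ omitted from the domain; this matches $\alpha_i^*$ exactly.

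Granting the identity, it suffices to know that every $\alpha_j$ with $j\in\{1,\ldots,n\}$ lies in $\langle\mathcal{A}\rangle$. For $n\ge 4$, the elements $\alpha_1,\ldots,\alpha_{n-2}$ are in $\mathcal{A}$ by definition, and $\alpha_{n-1},\alpha_n$ are supplied by Lemma~\ref{leG1}; for $n=3$, $\alpha_1,\alpha_2\in\mathcal{A}$ and $\alpha_3\in\langle\mathcal{A}\rangle$ by Lemma~\ref{leG1}. Since $\tau\in\mathcal{A}$ as well, the substitution $\alpha_i^*=\tau\alpha_{n-i+1}\tau$ places each $\alpha_i^*$ in $\langle\mathcal{A}\rangle$.

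The only potential subtlety is at the boundary values $i=1$ and $i=n$, where one side of the gap in $\alpha_i^*$ is empty and the map degenerates to a restriction of the identity; the same formula still works (for example, for $i=1$ the computation reduces $\tau\alpha_n\tau$ to $\mathrm{id}|_{\{2,\ldots,n\}}=\alpha_1^*$ because $\alpha_n=\mathrm{id}|_{\{1,\ldots,n-1\}}$), so no separate argument is needed. The main obstacle is thus purely bookkeeping: being careful with how $\tau$ swaps the two cases $x<i$ and $x>i$.
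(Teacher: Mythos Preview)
Your proof is correct, and in fact your identity
\[
\alpha_i^* = \tau\,\alpha_{n-i+1}\,\tau
\]
is simpler than the one used in the paper, which instead verifies
\[
\alpha_i^* = \alpha_i\,\tau\,\alpha_{n-i+1}\,\tau\,\alpha_i.
\]
Both identities are easy to check and both reduce the lemma to Lemma~\ref{leG1}, so the logical structure is the same; the difference is that you recognise $\alpha_i^*$ directly as the $\tau$-conjugate of $\alpha_{n-i+1}$ (the ``mirror-image'' element), while the paper's five-factor expression amounts to the same conjugation sandwiched between two extra copies of the involution $\alpha_i$, which cancel on the relevant domain. Your formulation is the more economical of the two.
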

\begin{proof}
We have $\alpha_i^* = \alpha_i\tau\alpha_{n-i+1}\tau\alpha_i$,
whence $\alpha_i^* \in \langle \cal A \rangle$, for $i = 1,2,\ldots,n$.
\end{proof}

Let
$$\varepsilon_{i,j} = \left(
             \begin{array}{cccccccccc}
               1 & 2 & \cdots & i-1 & i+1 & \cdots & j-1 & j+1 & \cdots & n \\
               1 & 2 & \cdots & i-1 & i+1 & \cdots & j-1 & j+1 & \cdots & n \\
             \end{array}
           \right),
$$
for $1 \leq i < i+1 < j \leq n$.

\begin{lemma}\label{leG3}
One has $\varepsilon_{i,j} \in \langle \cal A \rangle$, for $1 \leq i < i+1 < j \leq n$.
\end{lemma}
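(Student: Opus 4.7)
My plan is to factor $\varepsilon_{i,j}$ as a product of squares of generators already known to lie in $\langle\mathcal{A}\rangle$. The key observation is that each $\alpha_k$ acts as the identity on $\{1,\ldots,k-1\}$ and as the order-reversal $t\mapsto n+k+1-t$ on $\{k+1,\ldots,n\}$; both pieces are involutions on their respective intervals, so $\alpha_k$ is an involution on its domain $\{1,\ldots,n\}\setminus\{k\}$. Consequently $\alpha_k^2$ is nothing but the partial identity with domain $\{1,\ldots,n\}\setminus\{k\}$.

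With this observation in hand, the proof reduces to a one-line calculation. Since $\alpha_i^2$ and $\alpha_j^2$ are partial identities on sets missing exactly $i$ and exactly $j$ respectively, their composition has domain $\{1,\ldots,n\}\setminus\{i,j\}$ and acts as the identity on that domain; that is precisely $\varepsilon_{i,j}$. Thus the claim is $\varepsilon_{i,j}=\alpha_i^2\alpha_j^2$. The hypothesis $i+1<j$ in fact plays no role in this identity itself; it only fixes the intended scope of the notation $\varepsilon_{i,j}$.

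To close the argument, I need each of $\alpha_i$ and $\alpha_j$ to belong to $\langle\mathcal{A}\rangle$. When $n\geq 4$, the indices in $\{1,\ldots,n-2\}$ are handled by the very definition of $\mathcal{A}$, and the two remaining indices $n-1$ and $n$ are supplied by Lemma~\ref{leG1}; the case $n=3$ is analogous, with $\mathcal{A}=\{\tau,\alpha_1,\alpha_2\}$ and Lemma~\ref{leG1} providing $\alpha_3$. Hence $\alpha_i^2,\alpha_j^2\in\langle\mathcal{A}\rangle$, and therefore $\varepsilon_{i,j}=\alpha_i^2\alpha_j^2\in\langle\mathcal{A}\rangle$, as required. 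I do not anticipate any real obstacle beyond carefully tracking the domain of the composition of two partial maps, which is automatic here since each factor is a partial identity.
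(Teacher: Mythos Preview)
Your proof is correct and follows exactly the same route as the paper: the identity $\varepsilon_{i,j}=\alpha_i^2\alpha_j^2$. You have merely spelled out in detail why each $\alpha_k$ is an involution on its domain (so that $\alpha_k^2$ is the partial identity missing $k$) and made explicit the appeal to Lemma~\ref{leG1} for the indices $n-1,n$, which the paper leaves implicit.
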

\begin{proof}
We have $\varepsilon_{i,j} = \alpha_i^2\alpha_j^2$, whence
$\varepsilon_{i,j} \in \langle \cal A \rangle$, for $1 \leq i < i+1 < j \leq n$.
\end{proof}

Let
$$\varepsilon_{i,j}^* = \left(
             \begin{array}{cccccccccccc}
               1 & 2 & \cdots & i-1 & i+1 & i+2 & \cdots & j-2 & j-1 & j+1 & \cdots & n \\
               1 & 2 & \cdots & i-1 & j-1 & j-2 & \cdots & i+2 & i+1 & j+1 & \cdots & n \\
             \end{array}
           \right),$$
for $1 \leq i < i+1 < j \leq n$.

\begin{lemma}\label{leG4}
One has $\varepsilon_{i,j}^* \in \langle \cal A \rangle$, for $1 \leq i < i+1 < j \leq n$.
\end{lemma}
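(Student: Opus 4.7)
The plan is to exhibit a short product formula analogous to the preceding lemmas and verify it by a direct bookkeeping of indices. Concretely, I propose the identity
$$
\varepsilon_{i,j}^{*} \;=\; \alpha_i\,\alpha_{n-j+i+1}\,\alpha_i .
$$
Since the hypothesis $1 \leq i < i+1 < j \leq n$ forces $n-j+i+1 \in \{2,\ldots,n-1\}$, the middle factor $\alpha_{n-j+i+1}$ either lies in $\mathcal{A}$ directly, or equals $\alpha_{n-1}$ and hence belongs to $\langle\mathcal{A}\rangle$ by Lemma~\ref{leG1}. Combined with $\alpha_i \in \langle\mathcal{A}\rangle$, this will give $\varepsilon_{i,j}^{*} \in \langle\mathcal{A}\rangle$ as required.

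First I would track the rightmost $\alpha_i$: it deletes $i$ and reverses the tail $\{i+1,\ldots,n\}$ via $k\mapsto n-k+i+1$. In particular, the point $j$ is mapped to $n-j+i+1$, which is precisely the unique point excluded from the domain of $\alpha_{n-j+i+1}$. Hence the composition $\alpha_i\alpha_{n-j+i+1}$ already has the correct domain $\{1,\ldots,n\}\setminus\{i,j\}$ of $\varepsilon_{i,j}^{*}$. Next I would check that under that rightmost $\alpha_i$, the block $\{j+1,\ldots,n\}$ lands in $\{i+1,\ldots,n-j+i\}$, which is precisely the part fixed by $\alpha_{n-j+i+1}$, while the block $\{i+1,\ldots,j-1\}$ lands in $\{n-j+i+2,\ldots,n\}$, which is precisely the part reversed by $\alpha_{n-j+i+1}$.

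Then I would apply the leftmost $\alpha_i$. On the image of $\{j+1,\ldots,n\}$ the two outer factors compose to $\alpha_i^2$, which restricts to the identity on $\{j+1,\ldots,n\}$, so these points are indeed fixed by the triple product, as required by the definition of $\varepsilon_{i,j}^{*}$. On $\{i+1,\ldots,j-1\}$ the net effect is two successive reversals whose index arithmetic I would verify yields $k \mapsto i+j-k$, i.e. the reversal of the block $\{i+1,\ldots,j-1\}$, which is again what $\varepsilon_{i,j}^{*}$ prescribes.

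The only real obstacle is careful arithmetic through the two reversals; there is no structural difficulty. A quick sanity check at $n=5$, $i=1$, $j=4$ (where the formula reads $\alpha_1\alpha_3\alpha_1$) computes directly to $\bigl(\begin{smallmatrix} 2 & 3 & 5 \\ 3 & 2 & 5 \end{smallmatrix}\bigr) = \varepsilon_{1,4}^{*}$, which supports that the general verification will go through cleanly.
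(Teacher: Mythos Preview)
Your identity $\varepsilon_{i,j}^{*}=\alpha_i\,\alpha_{n-j+i+1}\,\alpha_i$ is correct, and the verification you outline goes through: writing $m=n-j+i+1$, one checks that $j\alpha_i=m$ so the domain of $\alpha_i\alpha_m$ is exactly $\overline{n}\setminus\{i,j\}$; for $k\in\{j+1,\ldots,n\}$ one gets $k\alpha_i\in\{i+1,\ldots,m-1\}$, fixed by $\alpha_m$, and then $\alpha_i^2$ restores $k$; for $k\in\{i+1,\ldots,j-1\}$ one gets $k\alpha_i\in\{m+1,\ldots,n\}$, so $k\alpha_i\alpha_m=m-i+k$, and the final $\alpha_i$ sends this to $i+j-k$. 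The index $m$ indeed lies in $\{2,\ldots,n-1\}$, and the borderline case $m=n-1$ (namely $j=i+2$) is correctly handled via Lemma~\ref{leG1}.

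This is a genuinely different and shorter route than the paper's. The paper proves the lemma via
\[
\varepsilon_{i,j}^{*}=\varepsilon_{i,j}\,\alpha_j^{*}\,\alpha_{j-i}^{*}\,\alpha_j^{*},
\]
thereby relying on the auxiliary families $\alpha_k^{*}$ and $\varepsilon_{i,j}$ built in Lemmas~\ref{leG2} and~\ref{leG3}. Your formula bypasses those intermediaries entirely, expressing $\varepsilon_{i,j}^{*}$ directly as a length-three word in the basic generators $\alpha_k$; the cost is a slightly longer index computation, but the payoff is a self-contained argument that does not need Lemmas~\ref{leG2} or~\ref{leG3}. One terminological caveat: the paper composes partial maps left to right (so in $\alpha_i\alpha_m\alpha_i$ the \emph{leftmost} $\alpha_i$ acts first), whereas you speak of tracking ``the rightmost $\alpha_i$'' first. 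Because your product is a palindrome this causes no mathematical error, but you should align the wording with the paper's convention.
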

\begin{proof}
We have $\varepsilon_{i,j}^* = \varepsilon_{i,j}\alpha_j^*\alpha_{j-i}^*\alpha_j^*$, whence
$\varepsilon_{i,j}^* \in \langle \cal A \rangle$, for $1 \leq i < i+1 < j \leq n$.
\end{proof}

Define
$\alpha_0 = \tau$, $\alpha_{n+1} = \id$, $\varepsilon_{0,n+1}^* = \tau$,
$\varepsilon_{0,j}^* = \alpha_j^*$, for $j=2,\ldots,n$, and $\varepsilon_{i,n+1}^* = \alpha_i$, for $i=1,\ldots,n-1$.

Let
$$
\rho_{i,j}^+ = \left(
             \begin{array}{cccccccccc}
               1 & 2 & \cdots & i-1 & i+2 & \cdots & j & j+2 & \cdots & n \\
               1 & 2 & \cdots & i-1 & i+1 & \cdots & j-1 & j+2 & \cdots & n \\
             \end{array}
           \right),
           $$
for $0 \leq i < i+2 < j \leq n$.

\begin{lemma}\label{leG5}
One has $\rho_{i,j}^+ \in \langle \cal A \rangle$, for $0 \leq i < i+2 < j \leq n$.
\end{lemma}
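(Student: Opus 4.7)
The plan is to exhibit $\rho_{i,j}^+$ as a product of two elements already known (from Lemma \ref{leG4}, together with Lemmas \ref{leG1}--\ref{leG2} and the boundary conventions) to lie in $\langle \mathcal{A} \rangle$. Specifically, I would prove the identity
$$\rho_{i,j}^+ = \varepsilon_{i, j+1}^* \cdot \varepsilon_{i, j}^*,$$
interpreted via the extended definitions when $i=0$ (so $\varepsilon_{0,j+1}^*$ is $\alpha_{j+1}^*$, or $\tau$ if in addition $j+1=n+1$) and when $j=n$ (so $\varepsilon_{i,n+1}^*$ is $\alpha_i$, resp.\ $\tau$ when $i=0$). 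In every case both factors belong to $\langle \mathcal{A} \rangle$: $\tau\in\mathcal{A}$, $\alpha_i\in\mathcal{A}$ for $i\le n-2$, $\alpha_{n-1},\alpha_n\in\langle\mathcal{A}\rangle$ by Lemma~\ref{leG1}, the $\alpha_j^*$ by Lemma~\ref{leG2}, and the generic $\varepsilon_{i,j}^*$ by Lemma~\ref{leG4}.

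The heuristic is that $\varepsilon_{i,j+1}^*$ reverses $\{i+1,\dots,j\}$ (while removing $i$ and $j+1$ from the domain) and $\varepsilon_{i,j}^*$ reverses the shorter interval $\{i+1,\dots,j-1\}$ (while removing $i$ and $j$ from the domain). Composing these two mismatched reversals translates the long interval by $-1$: for $k\in\{i+2,\dots,j\}$, $\varepsilon_{i,j+1}^*(k)=i+j+1-k$ lands in $\{i+1,\dots,j-1\}\subseteq \dom \varepsilon_{i,j}^*$, and then $\varepsilon_{i,j}^*(i+j+1-k)=i+j-(i+j+1-k)=k-1$. The endpoint $k=i+1$ is sent by the first factor to $j\notin\dom\varepsilon_{i,j}^*$ and therefore drops out, which is precisely why the domain of the composite loses $i+1$ (matching $\rho_{i,j}^+$). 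Indices $k\le i-1$ and $k\ge j+2$ are fixed by both factors.

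I would then execute this as a short case-by-case verification of the composite's action, confirming that the domain is $\{1,\dots,i-1\}\cup\{i+2,\dots,j\}\cup\{j+2,\dots,n\}$ and the effect matches $\rho_{i,j}^+$ on each piece.

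The only real obstacle is bookkeeping at the boundaries $i=0$ and $j=n$, where one of the two factors collapses to $\tau$, some $\alpha_\ell$ or some $\alpha_\ell^*$; one has to check the formula against the explicit definitions of these symbols, but the computation in each degenerate case is mechanical and parallels the generic computation above. Once the identity is established, the membership $\rho_{i,j}^+\in\langle\mathcal{A}\rangle$ is immediate.
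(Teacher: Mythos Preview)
Your proposal is correct and is essentially the same approach as the paper's: the paper proves the lemma via the identity
\[
\rho_{i,j}^+ \;=\; \alpha_i^2\,\alpha_{i+1}^2\,\alpha_{j+1}^2\,\varepsilon_{i,j+1}^*\,\varepsilon_{i,j}^*,
\]
so both arguments rest on exactly the composition $\varepsilon_{i,j+1}^*\varepsilon_{i,j}^*$ that you analyse. Your version is in fact a slight simplification: the leading factors $\alpha_i^2\alpha_{i+1}^2\alpha_{j+1}^2$ are partial identities on $\overline{n}\setminus\{i\}$, $\overline{n}\setminus\{i+1\}$, $\overline{n}\setminus\{j+1\}$ (with $\alpha_0^2=\alpha_{n+1}^2=\id$), and your domain computation shows that $\varepsilon_{i,j+1}^*\varepsilon_{i,j}^*$ already has domain $\overline{n}\setminus\{i,i+1,j+1\}$, so these factors are redundant. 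The boundary bookkeeping you describe for $i=0$ and $j=n$ is handled in the paper by the same extended conventions for $\varepsilon_{0,\cdot}^*$, $\varepsilon_{\cdot,n+1}^*$, $\alpha_0$, $\alpha_{n+1}$ that you invoke.
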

\begin{proof}
We have $\rho_{i,j}^+ = \alpha_i^2\alpha_{i+1}^2\alpha_{j+1}^2\varepsilon_{i,j+1}^*\varepsilon_{i,j}^*$, whence
$\rho_{i,j}^+ \in \langle \cal A \rangle$, for $0 \leq i < i+2 < j \leq n$.
\end{proof}

Let
$$\rho_{i,j}^- = \left(
             \begin{array}{cccccccccccc}
               1 & 2 & \cdots & i-2 & i & \cdots & j-2 & j+1 & \cdots & n \\
               1 & 2 & \cdots & i-2 & i+1 & \cdots & j-1 & j+1 & \cdots & n \\
             \end{array}
           \right),$$
for $1 \leq i < i+2 < j \leq n+1$.

\begin{lemma}\label{leG6}
One has $\rho_{i,j}^- \in \langle \cal A \rangle$, for $1 \leq i < i+2 < j \leq n+1$.
\end{lemma}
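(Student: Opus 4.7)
The plan is to exploit the $\tau$-symmetry linking the ``shift-down'' transformation $\rho_{i,j}^+$ to its ``shift-up'' counterpart $\rho_{i,j}^-$. Conjugation by the total order-reversal $\tau$ reverses the ambient order on $\overline{n}$, so a translation of a middle block to the left should, after a re-indexing of the endpoints, become a translation of a middle block to the right. This suggests that $\rho_{i,j}^-$ can be obtained from some $\rho_{i',j'}^+$ via conjugation by $\tau$, after which Lemma~\ref{leG5} finishes the job (since $\tau\in\mathcal{A}$).

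Concretely, I would verify the identity
$$
\rho_{i,j}^- \;=\; \tau\, \rho_{n-j+1,\,n-i+1}^+\, \tau, \qquad\text{for all } 1\leq i<i+2<j\leq n+1,
$$
by tracking each of the three (possibly empty) intervals making up $\dom\rho_{i,j}^-$. On the fixed block $\{1,\ldots,i-2\}$, the first $\tau$ lands in $\{n-i+3,\ldots,n\}$, where $\rho_{n-j+1,n-i+1}^+$ acts as the identity, and the second $\tau$ returns each point to itself. On the shift block $\{i,\ldots,j-2\}$, the first $\tau$ maps onto $\{n-j+3,\ldots,n-i+1\}$, then $\rho_{n-j+1,n-i+1}^+$ shifts it down to $\{n-j+2,\ldots,n-i\}$, and the second $\tau$ carries the result onto $\{i+1,\ldots,j-1\}$, which is exactly the image prescribed by the definition of $\rho_{i,j}^-$. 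The fixed block $\{j+1,\ldots,n\}$ is handled symmetrically.

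One then checks that the index translation $(i,j)\mapsto(n-j+1,\,n-i+1)$ sends the parameter range $1\leq i<i+2<j\leq n+1$ bijectively onto the range $0\leq i'<i'+2<j'\leq n$ in which Lemma~\ref{leG5} was established; hence $\rho_{n-j+1,\,n-i+1}^+\in\langle\mathcal{A}\rangle$. Together with $\tau\in\mathcal{A}$, the displayed identity gives $\rho_{i,j}^-\in\langle\mathcal{A}\rangle$, as required.

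The only mildly delicate point, and it is pure bookkeeping rather than a genuine obstacle, is the boundary behaviour when one of the three intervals in the domain of $\rho_{i,j}^-$ degenerates (for instance, $i=1$ empties the leftmost block, while $j=n+1$ empties the rightmost one). The identity remains valid in those cases, but the interval-by-interval verification above must be read with the convention that empty blocks contribute nothing to the computation.
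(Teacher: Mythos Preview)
Your proof is correct, and the identity $\rho_{i,j}^{-}=\tau\,\rho_{n-j+1,\,n-i+1}^{+}\,\tau$ checks out on all three blocks, including the degenerate boundary cases; the parameter translation $(i,j)\mapsto(n-j+1,n-i+1)$ does indeed carry the range $1\le i<i+2<j\le n+1$ onto the range $0\le i'<i'+2<j'\le n$ used in Lemma~\ref{leG5}.

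However, your route is genuinely different from the paper's. The paper does not exploit the $\tau$-symmetry to reduce $\rho^{-}$ to $\rho^{+}$; instead it writes down a direct factorisation
\[
\rho_{i,j}^{-}=\alpha_{i-1}^{2}\,\alpha_{j-1}^{2}\,\alpha_{j}^{2}\,\varepsilon_{i-1,j}^{*}\,\varepsilon_{i,j}^{*},
\]
entirely parallel to the one it gave for $\rho_{i,j}^{+}$ in the preceding lemma. The paper's approach is uniform --- each of Lemmas~\ref{leG1}--\ref{leG6} exhibits an explicit word in the earlier building blocks --- and makes the two shift-maps formally independent of one another. Your approach is shorter and more conceptual: it recognises $\rho^{-}$ as the mirror image of $\rho^{+}$ and thereby derives Lemma~\ref{leG6} from Lemma~\ref{leG5} in one line, at the cost of relying on that earlier lemma rather than on the more primitive generators directly. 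Either argument is perfectly adequate here.
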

\begin{proof}
We have $\rho_{i,j}^- = \alpha_{i-1}^2\alpha_{j-1}^2\alpha_{j}^2\varepsilon_{i-1,j}^*\varepsilon_{i,j}^*$, whence
$\rho_{i,j}^- \in \langle \cal A \rangle$, for $1 \leq i < i+2 < j \leq n+1$.
\end{proof}

Now, we are prepared to prove that $\cal A$ is a generating set of the monoid $\PAut(P_n)$.

\begin{proposition}\label{prG1}
One has $\PAut(P_n) = \langle \cal A \rangle$.
\end{proposition}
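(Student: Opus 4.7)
The plan is to show that an arbitrary $\beta\in\PAut(P_n)$ is a product of elements of $\mathcal{A}$, using as building blocks the elements $\tau$, $\alpha_i$ (all $i$), $\alpha_i^*$, $\varepsilon_{i,j}$, $\varepsilon_{i,j}^*$, $\rho_{i,j}^{\pm}$ that Lemmas~\ref{leG1}--\ref{leG6} have already placed in $\langle\mathcal{A}\rangle$. Fix $\beta$, and let $A_1<\cdots<A_r$ and $B_1<\cdots<B_r$ be the maximal intervals of $\dom\beta$ and $\im\beta$ respectively, with a permutation $\pi$ of $\{1,\ldots,r\}$ and orientations $o_i\in\{+,-\}$ encoding the order isomorphism $\beta|_{A_i}\colon A_i\to B_{\pi(i)}$.

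The first step is to capture arbitrary idempotents of $\PAut(P_n)$. For each $k\in\overline{n}$ a direct computation shows $\alpha_k^2=\id|_{\overline{n}\setminus\{k\}}$, and this element lies in $\langle\mathcal{A}\rangle$ (invoking Lemma~\ref{leG1} when $k\in\{n-1,n\}$). Since $\PAut(P_n)$ is an inverse semigroup its idempotents commute, so the product $\prod_{k\notin\dom\beta}\alpha_k^2$ equals $\id|_{\dom\beta}$ and belongs to $\langle\mathcal{A}\rangle$. Thus any prescribed domain is achievable.

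The second step handles orientations. For each $i\in\{1,\ldots,r\}$ one can construct a \emph{local reverser}, namely an element of $\PAut(P_n)$ that reverses the single interval $A_i$ (or $B_{\pi(i)}$) internally and fixes the remainder of $\overline{n}$ pointwise. For boundary intervals these are built directly from suitable $\alpha_j$ and $\alpha_j^*$ (together with their squares to correct the domain), and for interior intervals they are built from $\varepsilon_{i,j}^*$-type operations composed with the appropriate idempotents. Post-multiplying $\beta$ by a suitable product of local reversers reduces us to the case in which $\beta|_{A_i}$ is order-preserving for every~$i$.

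The third step realises the rearrangement. With $\beta$ now order-preserving on each $A_i$, it remains to transform $\id|_{\dom\beta}$ into an order-preserving $\beta'$ whose maximal intervals $A_i$ of the domain are mapped to $B_{\pi(i)}$. Each $\rho_{i,j}^{\pm}$ slides a single maximal interval of the current pattern by exactly one position, fixing all other points of $\overline{n}$, so by iterating such shifts any interval can be transported to any compatible target location. A double induction---first on the number of intervals $r$ (transporting, say, the rightmost interval to its destination $B_{\pi(r)}$ and then recursing on the remaining pattern, now living in a strictly smaller initial segment of $\overline{n}$), and within each step on the transport distance---concludes the construction. The main obstacle is precisely this last step: one must schedule the $\rho_{i,j}^{\pm}$-shifts so that no two intervals collide and every intermediate configuration remains a legitimate partial-automorphism pattern (i.e., has a gap of the required width wherever a shift is being performed). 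The key observation enabling the induction is that at each stage there is always at least one gap between consecutive intervals permitting a shift in at least one direction, and when no direct shift is available $\tau$ can be used to reflect the whole picture, reducing the case to one already handled.
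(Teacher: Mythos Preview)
Your first two steps are sound and parallel the paper's construction. The gap is in Step~3. You claim that iterating the shifts $\rho_{i,j}^{\pm}$, with $\tau$ as a fallback, suffices to realise an arbitrary permutation $\pi$ of the image blocks together with their placement; but the shifts handle only translation, not reordering. A composition $\gamma\rho_{i,j}^{+}$ keeps the full domain only when the current image of $\gamma$ avoids $i$, $i+1$ and $j+1$, so in particular a \emph{double} gap in $\im\gamma$ is required (and dually for $\rho^{-}$). Take $n=5$, $\dom\beta=\im\beta=\{1,3,5\}$ and $\beta$ the $3$-cycle $1\mapsto3$, $3\mapsto5$, $5\mapsto1$. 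Starting from $\id|_{\{1,3,5\}}$, the image complement is $\{2,4\}$, which contains no two consecutive integers, so no $\rho^{\pm}$ can be applied without shrinking the domain; and since $\{1,3,5\}$ is $\tau$-invariant, reflecting does not create a double gap either. Your algorithm is stuck at the very first move. A related flaw is the induction scheme itself: ``transport the rightmost interval $A_r$ to $B_{\pi(r)}$ and recurse on a strictly smaller initial segment'' fails whenever $\pi(r)\ne r$, because the remaining targets then still include $B_r$, which lies above $B_{\pi(r)}$.

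The paper avoids this by splitting the task you fuse in Step~3. It first uses the block-reversals $\varepsilon^{*}_{t-1,q+1}$ to correct the permutation (a reversal of a range containing the block currently in position $s$ and the block that should be in position $s$ is a selection-sort step, and needs no double gap); only once $\sigma_\alpha=\sigma_\beta$ is achieved are the $\rho^{\pm}$ shifts invoked, and at that stage the required double gap is guaranteed to exist in the direction of the needed translation. All the ingredients you need are already on the table---the $\varepsilon^{*}$'s appear in your Step~2---but you must deploy them again for the reordering, not just for orientations.
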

\begin{proof}
We will perform this proof by using a recurring construction.
First, for arbitrary element $\alpha$ of $\PAut(P_n)$, we set some notations:  denote by  $I_1^\alpha, I_2^\alpha, \ldots, I_k^\alpha$
the maximal intervals of $\dom \alpha$ such that
$$
I_1^\alpha < I_2^\alpha <\cdots < I_k^\alpha.
$$
Let $J_r^\alpha = I_r^\alpha\alpha$, for $r=1,\ldots,k$.
Then
$J_1^\alpha, J_2^\alpha, \ldots, J_k^\alpha$ are the maximal intervals of $\im \alpha$.
Denote by $\sigma_\alpha$ the permutation of $\{1,2,\ldots,k\}$ such that
$$
J_{1\sigma_\alpha}^\alpha < J_{2\sigma_\alpha}^\alpha < \cdots < J_{k\sigma_\alpha}^\alpha.
$$

\smallskip

Now, fix $\alpha \in \PAut(P_n)$.
Let $I = \overline{n}\setminus \dom \alpha$ and define $\beta = \prod_{i\in I}\alpha_i^2$
(observe that $\alpha_i^2$, $i\in\overline{n}$, is an idempotent and idempotents commute). Clearly, $\dom \beta = \dom \alpha$.

Let $s$ be the least number $r \in \{1,\ldots,k\}$ such that $r\sigma_\alpha \neq r\sigma_\beta$.
Let $t$ be the minimal element in the set $J_{s\sigma_\beta}^\beta$ and $q$ be the maximal element of $J_{s\sigma_\alpha\sigma_\beta}^\beta$.
Then, we put
$$
\beta = \beta\varepsilon^*_{t-1,q+1}
$$
(i.e. we define a \textit{new} $\beta$ as being $\beta\varepsilon^*_{t-1,q+1}$; below we will made similar \textit{variables}'s redefinitions).
Then  either $r\sigma_\alpha = r\sigma_\beta$, for all $r \in \{1,\ldots,k\}$,
or the least number $r\in \{1,\ldots,k\}$ such that $r\sigma_\alpha \neq r\sigma_\beta$ is greater than $s$.

We repeat the procedure until $r\sigma_\alpha = r\sigma_\beta$ for all $r \in \{1,\ldots,k\}$.

Further, we put $\gamma = \beta$ and let $u$ be the least number $p \in \{1,\ldots,k\}$ such that
$\gamma|_{I_{p\sigma_\gamma}^\alpha} \neq \alpha|_{I_{p\sigma_\gamma}^\alpha}$.

If $\im \gamma|_{I_{u\sigma_\gamma}^\alpha} = \im \alpha|_{I_{u\sigma_\gamma}^\alpha}$ then we put
$$
\gamma = \gamma\varepsilon_{a,b}^*,
$$
where $a$ and $b$ are the greatest and the least number, respectively, such that $a < J_{u\sigma_\gamma}^\gamma < b$.

If $\im \gamma|_{I_{u\sigma_\gamma}^\alpha} \neq \im \alpha|_{I_{u\sigma_\gamma}^\alpha}$ then there exist $x, y \in \overline{n}$ such that
$\im \alpha|_{I_{u\sigma_\gamma}^\alpha} = \{x,\ldots,z\}$ and either $\im \gamma|_{I_{u\sigma_\gamma}^\alpha} = \{x-y,\ldots,z-y\}$ or
$\im \gamma|_{I_{u\sigma_\gamma}^\alpha} = \{x+y,\ldots,z+y\}$, where $z = x + |I_{u\sigma_\gamma}^\alpha| - 1$.

First, suppose that $\im \gamma|_{I_{u\sigma_\gamma}^\alpha} = \{x-y,\ldots,z-y\}$. Then,
there exists $j\in \overline{n}$ with $j > J_{u\sigma_\gamma}^\gamma$ such that $j,j+1 \notin \im \gamma$. In this case, we put
$$
\gamma = \gamma\rho_{x-y,j+1}^-.
$$

On the other hand, admit that $\im \gamma|_{I_{u\sigma_\gamma}^\alpha} = \{x+y,\ldots,z+y\}$.
Then, there exists $j < J_{u\sigma_\gamma}^\gamma$, with $j > J_{p\sigma_\gamma}^\gamma$, for all $p < u$ such that $j-1,j \notin \im \gamma$.
In this case, we put
$$\gamma = \gamma\rho_{j-1,z+y}^+.$$

After $y$ such steps, we obtain a transformation $\gamma$ such that $\im \gamma|_{I_{u\sigma_\gamma}^\alpha} = \im \alpha|_{I_{u\sigma_\gamma}^\alpha}$.
If $\gamma|_{I_{u\sigma_\gamma}^\alpha} \ne \alpha|_{I_{u\sigma_\gamma}^\alpha}$
then we put
$$
\gamma = \gamma\varepsilon_{a,b}^*,
$$
where $a$ and $b$ are the greatest and the least number, respectively, such that $a < J_{u\sigma_\gamma}^\gamma < b$.

We repeat the procedure until $\gamma = \alpha$. Therefore, by Lemmas \ref{leG1}-\ref{leG6},
we may deduce that $\alpha \in \langle \cal A \rangle$ and so $\cal A$ is a generating set of $\PAut(P_n)$, as required.
\end{proof}

\smallskip

Next, we will show that $\cal A$ is a generating set of $\PAut(P_n)$ of minimal size.

\smallskip

Let $G$ be a generating set of $\PAut(P_n)$.

First, notice that $\dom\tau = \overline{n}$. Moreover, for $\alpha\in \PAut(P_n)$, clearly, we have
$\dom\alpha =\overline{n}$ if and only if $\alpha=\tau$ or $\alpha=\id=\tau^2$.
Thus, it follows immediately that:

\begin{lemma}\label{leR1}
One has $\tau \in G$.
\end{lemma}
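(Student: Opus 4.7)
The plan is to exploit the fact that $\tau$ is one of only two full-domain elements of $\PAut(P_n)$, together with the elementary observation that composition of partial maps can only shrink domains. Since $G$ is a generating set of the monoid $\PAut(P_n)$, I can write $\tau = \alpha_1\alpha_2\cdots\alpha_k$ for some $k\ge 1$ and some $\alpha_1,\ldots,\alpha_k\in G$ (note $k\ne 0$ because $\tau\ne\id$, so the empty product does not work).

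The key observation is that for any $\alpha,\beta\in\I_n$ one has $\dom(\alpha\beta)\subseteq\dom\alpha$, and more generally $\dom(\alpha_1\cdots\alpha_j)\subseteq\dom(\alpha_1\cdots\alpha_{j-1})$ for every $j$. Applied to the factorisation of $\tau$, this yields
$$
\overline{n}=\dom\tau=\dom(\alpha_1\cdots\alpha_k)\subseteq\dom(\alpha_1\cdots\alpha_j)\subseteq\overline{n},
$$
so every prefix $\alpha_1\cdots\alpha_j$ has domain $\overline{n}$. In particular each $\alpha_j$ must satisfy $\dom\alpha_j=\overline{n}$, since $\dom(\alpha_1\cdots\alpha_j)\subseteq\dom\alpha_j$ once we apply the same inclusion to the suffix $\alpha_j\cdots\alpha_k$ acting on $\im(\alpha_1\cdots\alpha_{j-1})$; equivalently, one may argue inductively starting from $\alpha_1$, observing that if $\alpha_1\cdots\alpha_{j-1}$ has full domain and full image-equal-to-$\overline{n}$ (being a partial permutation of $\overline{n}$ with full domain), then $\dom(\alpha_1\cdots\alpha_j)=\overline{n}$ forces $\dom\alpha_j=\overline{n}$.

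By the observation recorded immediately before the lemma, the only elements of $\PAut(P_n)$ whose domain is $\overline{n}$ are $\tau$ and $\id=\tau^2$. Hence every $\alpha_j$ belongs to $\{\tau,\id\}$. Since the product equals $\tau\ne\id$, at least one of the factors $\alpha_j$ must be $\tau$ itself; this factor lies in $G$, yielding $\tau\in G$. I do not anticipate any real obstacle: the whole argument rests on the two-element classification of full-domain elements of $\PAut(P_n)$ (already provided) and the trivial domain-monotonicity of composition of partial transformations.
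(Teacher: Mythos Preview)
Your proof is correct and follows exactly the approach the paper indicates: since the only elements of $\PAut(P_n)$ with domain $\overline{n}$ are $\tau$ and $\id=\tau^2$, and since domains can only shrink under composition, any factorisation of $\tau$ over $G$ must involve $\tau$ itself. The paper treats this as immediate from that observation and omits the details you have written out.
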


Let
$$
A_i = \{\alpha \in \PAut(P_n) \mid \dom \alpha = \overline{n}\setminus\{i\} \mbox{ or } \dom \alpha = \overline{n}\setminus\{n-i+1\}\},
$$
for $i = 1,\ldots,\lceil\frac{n}{2}\rceil$.

\begin{lemma}\label{leR2}
One has $|G \cap A_i| \geq 1$, for all $i \in \{1,\ldots,\lceil\frac{n}{2}\rceil\}$.
\end{lemma}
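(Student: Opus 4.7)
The plan is to prove a stronger auxiliary claim by induction on $k$: for every $\alpha \in A_i$ and every factorization $\alpha = g_1 g_2 \cdots g_k$ with $g_1, \ldots, g_k \in \PAut(P_n)$, at least one factor $g_j$ lies in $A_i$. Once this is established, I apply it to the idempotent $\id|_{\overline{n} \setminus \{i\}}$, which clearly lies in $A_i$ but is not the identity, together with any expression of it as a product of elements of the given generating set $G$ (necessarily of length $k \geq 1$); this forces $G \cap A_i \neq \emptyset$.

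For the base case $k = 1$, $g_1 = \alpha \in A_i$ is immediate. For the inductive step, I use the elementary fact that $\dom(g_1 g_2 \cdots g_k) \subseteq \dom g_1$, so $\dom g_1$ is a superset of the $(n-1)$-element set $\dom \alpha$ and hence has cardinality $n-1$ or $n$. In the former case $\dom g_1 = \dom \alpha$, which is one of the two $A_i$-type domains, so $g_1 \in A_i$ and we are done. In the latter case, the observation just before Lemma \ref{leR1} forces $g_1 \in \{\id, \tau\}$; if $g_1 = \id$ then the factorization reduces to $\alpha = g_2 \cdots g_k$ of length $k-1$, to which the inductive hypothesis applies directly.

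The main step, and the only place where the particular shape of $A_i$ really matters, is the subcase $g_1 = \tau$. Then $g_2 \cdots g_k = \tau \alpha$, and to invoke the inductive hypothesis on this shorter product I must verify that $\tau \alpha \in A_i$. This is precisely why $A_i$ has been defined as the union of the two domain conditions $\dom = \overline{n} \setminus \{i\}$ and $\dom = \overline{n} \setminus \{n-i+1\}$: a direct computation gives $\dom(\tau \alpha) = \{x \in \overline{n} \mid n - x + 1 \in \dom \alpha\}$, and since $\tau$ is the involution $x \mapsto n-x+1$ which swaps $i$ with $n-i+1$, it interchanges the two $A_i$-type domains. Hence $\tau \alpha \in A_i$, the inductive hypothesis yields a factor $g_j$ with $j \geq 2$ lying in $A_i$, and the induction closes.
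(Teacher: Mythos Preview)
Your proof is correct and follows essentially the same approach as the paper: both arguments observe that the first factor of any factorization of an element of $A_i$ either already lies in $A_i$ (when its domain has size $n-1$) or is one of $\id,\tau$, in which case it can be peeled off because left multiplication by $\tau$ preserves $A_i$. The only cosmetic difference is that the paper first normalizes the factorization (removing $\id$'s and collapsing consecutive $\tau$'s) so that the argument terminates after examining at most two factors, whereas you package the same peeling-off idea as an induction on the length of the factorization.
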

\begin{proof}
Let $i \in \{1,\ldots,\lceil\frac{n}{2}\rceil\}$ and consider the transformation $\alpha_{i}$ defined previously.
Notice that $\dom \alpha_i = \overline{n}\setminus\{i\}$ and so $\alpha_i\in A_i$.
Let $\beta_1,\ldots,\beta_k\in G\setminus\{\id\}$ be such that
$\alpha_{i}=\beta_1\cdots\beta_k$ and $\{\beta_j,\beta_{j+1}\}\ne\{\tau\}$, for $j=1,\ldots,k-1$.
Since $\dom \alpha_i = \dom (\beta_1\cdots\beta_k) \subseteq \dom \beta_1$, $\rank\alpha_i=n-1$ and $\beta_1\neq\id$,
we have $\dom\beta_1=\dom\alpha_i$ or $\beta_1=\tau$.

If $\dom\beta_1=\dom\alpha_i$ then $\beta_1\in A_i$ and so $\beta_1\in G\cap A_i$.

On the other hand, suppose that $\beta_1=\tau$. In that case,
since $\dom \alpha_i = \dom(\beta_1\cdots\beta_k) \subseteq \dom(\tau\beta_2)$,
$\rank\alpha_i=n-1$ and $\beta_{2}\in G\setminus \{\id,\tau\}$, we have
$\dom\alpha_i=\dom(\tau\beta_2)$, whence $\dom \beta_2 = \overline{n}\setminus\{n-i+1\}$ and so  $\beta_2\in G\cap A_i$.

Thus, in both cases, we have shown that $|G \cap A_i| \neq\emptyset$, as required.
\end{proof}

\begin{lemma}\label{leR3}
Let $n \geq 6$. Then $|G \cap A_i| \geq 2$, for all $i \in \{3,\ldots,\lfloor\frac{n}{2}\rfloor\}$.
\end{lemma}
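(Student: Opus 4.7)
The plan is to argue by contradiction. Suppose $|G \cap A_i| \leq 1$ for some $i \in \{3, \ldots, \lfloor n/2 \rfloor\}$; by Lemma \ref{leR2} then $|G \cap A_i| = 1$, so write $G \cap A_i = \{\eta\}$. I will exhibit an element $\alpha \in A_i$ not in $\langle G \rangle$, contradicting the assumption that $G$ generates $\PAut(P_n)$. The key input from $n \geq 6$ and $i \geq 3$ is that both maximal intervals of $\dom \alpha$ have size at least two (namely $i - 1 \geq 2$ and $n - i \geq \lceil n/2 \rceil \geq 3$), so each element of $A_i$ carries a nontrivial orientation pair in $\{P,R\}^2$ recording whether its restriction to each interval is order-preserving ($P$) or order-reversing ($R$).

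The first step is to show that $A_i \cap \langle G \rangle \subseteq \langle \tau, \eta \rangle$. Given a reduced product $\alpha = \beta_1 \cdots \beta_k$ with $\alpha \in A_i$, since rank is non-increasing under composition and $\rank \alpha = n - 1$, each factor must have rank at least $n - 1$; moreover, any rank-$(n - 1)$ factor $\mu$ appearing at position $j$ forces $\dom \mu = \im(\beta_1 \cdots \beta_{j - 1})$. Hence if some rank-$(n - 1)$ factor belonged to $G \cap A_{j'}$ with $j' \neq i$, then all subsequent partial products would have image in $\{\overline{n}\setminus\{j'\}, \overline{n}\setminus\{n - j' + 1\}\}$, a set disjoint from $\{\overline{n}\setminus\{i\}, \overline{n}\setminus\{n - i + 1\}\}$ (the only possible images for elements of $A_i$), since $j' \neq i$ and $i \neq n - i + 1$ under our hypotheses. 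Therefore every rank-$(n - 1)$ factor must be $\eta$, and the remaining factors are $\tau$, so $\alpha \in \langle \tau, \eta\rangle$.

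The second step is to show $\langle \tau, \eta \rangle \cap A_i \subsetneq A_i$ by tracking the orientation pair along products. Right-multiplication by $\tau$ flips both coordinates of the orientation pair; left-multiplication by $\tau$ flips and swaps them. Composition of two rank-$(n-1)$ elements with matching image-equals-domain combines orientations coordinatewise modulo $2$ (suitably permuted when the domain/image intervals interchange roles). A case analysis on the ``type'' of $\eta$---four cases according to $o(\eta)$ and two cases according to whether $\dom \eta = \im \eta$---shows that in every $\mathcal{H}$-class of $A_i$ the set of orientation pairs actually reached by $\langle \tau, \eta\rangle$ forms a subgroup of the Klein four-group of order at most two. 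Hence at least two of the four orientation pairs in any $\mathcal{H}$-class are missed, and choosing $\alpha \in A_i$ with such a missed pattern (for instance $\alpha = \alpha_i^*$ when $\eta = \alpha_i$, so the missed orientation $(R,P)$ is attained by $\alpha_i^*$) yields $\alpha \notin \langle \tau, \eta\rangle \supseteq A_i \cap \langle G\rangle$, the desired contradiction.

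The main obstacle lies in the orientation bookkeeping of the second step: one must verify that no clever interleaving of $\tau$'s between copies of $\eta$ in a rank-$(n-1)$ word can escape the two-element subgroup of orientation patterns generated from $o(\eta)$. The rigidity from the first step---that each rank-$(n-1)$ transition forces image equal to domain, tightly restricting the allowed $\mathcal{R}$- and $\mathcal{L}$-class jumps---is precisely what makes this case analysis tractable.
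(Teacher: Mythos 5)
Your proposal is correct and takes essentially the same route as the paper: assume $G \cap A_i = \{\eta\}$, use rank/domain--image rigidity to force $\langle G\rangle \cap A_i \subseteq \langle \tau,\eta\rangle$, and then show $\langle\tau,\eta\rangle$ reaches at most $8$ of the $16$ elements of $A_i$. The only difference is that where you defer to ``orientation bookkeeping,'' the paper closes that step by explicit enumeration, using $\eta^3=\eta$ and $\rank\eta\tau\eta=n-2$ when $\im\eta=\dom\eta$, and $(\eta\tau)^2=\id|_{\dom\eta}$ and $\rank\eta^2=n-2$ otherwise (these identities hold because the group $\mathcal{H}$-class on $\overline{n}\setminus\{i\}$ is a Klein four-group), which is exactly the two-elements-per-$\mathcal{H}$-class bound you describe.
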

\begin{proof}
First, observe that it is a routine matter to check that $|A_i| = 16$, for all $i \in \{3,\ldots,\lfloor\frac{n}{2}\rfloor\}$.
Recall also that $\tau \in G$, by Lemma \ref{leR1}.

Now, assume by contradiction that $|G \cap A_i| < 2$, for some $i \in \{3,\ldots,\lfloor\frac{n}{2}\rfloor\}$.
Then, by Lemma \ref{leR2}, we have $G \cap A_i = \{\alpha\}$, for some
$\alpha\in\PAut(P_n)$. Without loss of generality, we may suppose that $\dom \alpha = \overline{n} \setminus \{i\}$.
Hence, we have two cases:

Case 1. $\im \alpha = \overline{n} \setminus \{i\}$. Then, as $\alpha^3 = \alpha$ and $\rank \alpha\tau\alpha = n-2$, we have
$$
\langle G \rangle \cap A_i = \{\alpha, \alpha^2, \alpha\tau, \tau\alpha, \alpha^2\tau, \tau\alpha^2, \tau\alpha\tau, \tau\alpha^2\tau\}\neq A_i,
$$
which is a contradiction (since $G$ is a generating set of $\PAut(P_n)$).

Case 2. $\im \alpha = \overline{n} \setminus \{n-i+1\}$. In this case,
as $(\alpha\tau)^2 = \id|_{\dom\alpha}$, $(\tau\alpha)^2 = \id|_{\im\alpha}$ and $\rank \alpha^2 = n-2$, we obtain
$$
\langle G \rangle \cap A_i = \{\alpha, \alpha\tau, \tau\alpha, \alpha\tau\alpha, \tau\alpha\tau, (\alpha\tau)^2, (\tau\alpha)^2, \tau(\alpha\tau)^2\} \neq A_i,
$$
which again is a contradiction, as required.
\end{proof}

Now, as a consequence of Proposition \ref{prG1} and Lemmas \ref{leR1}\---\ref{leR3},
we may prove the first of our main results:

\begin{theorem}\label{thR1}
The rank of $\PAut(P_3)$ is equal to $3$ and, for $n\geq 4$, the rank of $\PAut(P_n)$ is equal to $n-1$.
\end{theorem}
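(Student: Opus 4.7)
My plan is to establish the stated ranks by matching an upper bound against a lower bound. For the upper bound I would invoke Proposition \ref{prG1}, which already guarantees $\PAut(P_n) = \langle \mathcal{A} \rangle$, and simply count: $|\mathcal{A}| = 3$ when $n=3$ and $|\mathcal{A}| = 1 + (n-2) = n-1$ when $n \geq 4$. So it remains to show that no generating set can be smaller.

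For the lower bound, I would fix an arbitrary generating set $G$ of $\PAut(P_n)$ and exploit the disjointness of the pieces $\{\tau\}, A_1, A_2, \ldots, A_{\lceil n/2 \rceil}$. The element $\tau$ contributes $1$ to $|G|$ by Lemma \ref{leR1}. These sets are mutually disjoint because the elements of $A_i$ have domain $\overline{n}\setminus\{i\}$ or $\overline{n}\setminus\{n-i+1\}$, and for distinct $i,j \in \{1,\ldots,\lceil n/2 \rceil\}$ the two-element sets $\{i,n-i+1\}$ and $\{j,n-j+1\}$ are disjoint (a short check, separating the even and odd cases at the midpoint); moreover $\tau \notin \bigcup_i A_i$ because $\dom\tau = \overline{n}$. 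Hence $|G| \geq 1 + \sum_{i=1}^{\lceil n/2 \rceil} |G \cap A_i|$, and I would then combine Lemma \ref{leR2} (giving $|G \cap A_i| \geq 1$ for every $i$) with Lemma \ref{leR3} (giving $|G \cap A_i| \geq 2$ for every $i \in \{3,\ldots,\lfloor n/2\rfloor\}$ when $n \geq 6$).

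The remainder is an arithmetic case distinction. For $n = 3$ there are two relevant sets $A_1, A_2$, yielding $|G| \geq 1+1+1 = 3$. For $n = 4$ and $n = 5$ the bound reads $1 + \lceil n/2 \rceil$, which gives $3$ and $4$ respectively, i.e.\ $n-1$. For $n \geq 6$, using that $\lceil n/2 \rceil - \lfloor n/2 \rfloor$ equals $0$ when $n$ is even and $1$ when $n$ is odd, the sum $\sum_{i=1}^{\lceil n/2\rceil} |G \cap A_i|$ is at least $2 + 2(\lfloor n/2 \rfloor - 2) + (\lceil n/2 \rceil - \lfloor n/2 \rfloor)$, which simplifies to $n-2$ in both parities; adding the contribution of $\tau$ gives the desired $n-1$.

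No step looks genuinely hard once Proposition \ref{prG1} and Lemmas \ref{leR1}--\ref{leR3} are in hand; the mild obstacle is the bookkeeping around small $n$ (ensuring the partition of the argument handles $n=3,4,5$ separately, since Lemma \ref{leR3} is vacuous there) and verifying that the sets $A_i$ are genuinely disjoint when $n$ is odd and $i = \lceil n/2 \rceil$ causes $\{i,n-i+1\}$ to collapse to a singleton. Once these small-case subtleties are checked, the theorem follows by combining the upper bound from $\mathcal{A}$ with the lower bound just described.
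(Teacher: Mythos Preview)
Your proposal is correct and follows essentially the same approach as the paper: the upper bound via $|\mathcal{A}|$ from Proposition~\ref{prG1}, and the lower bound by combining Lemmas~\ref{leR1}--\ref{leR3} through the disjoint sets $\{\tau\}, A_1, \ldots, A_{\lceil n/2 \rceil}$, with the same case split at $n = 3,4,5$ versus $n \geq 6$. You are slightly more explicit than the paper about verifying the disjointness of the $A_i$ and about the parity bookkeeping in the final count, but the argument is structurally identical.
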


\begin{proof}
By Proposition \ref{prG1}, the set $\cal A$ generates $\PAut(P_{n})$. Thus,
$$\rank\PAut(P_{n})\leq \left\vert \mathcal{A} \right\vert = \left\{
\begin{array}{ll}
3 & \text{if } n=3 \\
n-1 & \text{if } n\geq 4.
\end{array}%
\right.$$

Let $G$ be any generating set of $\PAut(P_{n})$. By Lemmas \ref{leR1} and \ref{leR2}, the transformation $\tau$ and $\left\lceil \frac{n}{2}\right\rceil$ pairwise
different transformations of rank $n-1$ are in $G$. Thus, $\left\vert G\right\vert \geq 1+\left\lceil \frac{n}{2}\right\rceil$. In particular, we
have $\left\vert G\right\vert \geq 3$, if $n=3,4$, and $\left\vert G\right\vert \geq 4$, if $n=5$. If $n\geq 6$ then, by Lemma \ref{leR3}, there exist
$\left\lfloor \frac{n}{2}\right\rfloor -2$ additional pairwise different transformations with rank $n-1$ in $G$.
This shows that
$$
\rank\PAut(P_{n}) \geq \left\{
\begin{array}{ll}
3 & \text{if } n=3 \\
n-1 & \text{if } n\geq 4,
\end{array}%
\right.
$$
as required.
\end{proof}

\smallskip

Next, we calculate the rank of the monoid $\IEnd(P_{n})$.

\smallskip

Define
$$
\beta_i = \left(
             \begin{array}{ccccccccc}
               1 & 2 & \cdots & i-1 & i+1 & i+2 & \cdots & n-1 & n \\
               1 & 2 & \cdots & i-1 & i & i+1 & \cdots & n-2 & n-1 \\
             \end{array}
           \right),
           $$
for  $i = 2,\ldots,n-1$, and let $\mathcal{B} = \mathcal{A} \cup \{\beta_i \mid i = 2,\ldots,\lceil\frac{n}{2}\rceil\}$.

\begin{lemma}\label{leG7}
One has $\{\beta_i \mid i = 2,\ldots,n-1\} \subseteq \langle \cal B \rangle$.
\end{lemma}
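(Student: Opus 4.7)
The plan is to leverage the Green's-relation description of $\IEnd(P_n)$ established in Section~1. For $i\in\{2,\ldots,\lceil\frac{n}{2}\rceil\}$ the transformation $\beta_i$ lies in $\mathcal{B}$ by construction, so it suffices to handle the remaining indices $i\in\{\lceil\frac{n}{2}\rceil+1,\ldots,n-1\}$. For such $i$, my idea is to relate $\beta_i$ to $\beta_{n+1-i}$ via $\mathcal{J}$ and then extract the witnessing factors from $\PAut(P_n)$, which we already control by Proposition~\ref{prG1}.

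First I would compute the type of $\beta_i$. The image $\im\beta_i=\{1,\ldots,n-1\}$ is a single maximal interval; its preimage $\dom\beta_i=\{1,\ldots,n\}\setminus\{i\}$ splits into the two maximal intervals $\{1,\ldots,i-1\}$ and $\{i+1,\ldots,n\}$, which map order-preservingly and respectively onto $\{1,\ldots,i-1\}$ and $\{i,\ldots,n-1\}$. Hence $\tau_{\beta_i}(\im\beta_i)=(i-1,n-i)$, and symmetrically $\tau_{\beta_{n+1-i}}(\im\beta_{n+1-i})=(n-i,i-1)$. Since $(n-i,i-1)^R=(i-1,n-i)$, the elements $\beta_i$ and $\beta_{n+1-i}$ have similar type in the sense of Section~1.

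By Proposition~\ref{prGR3} this yields $\beta_i\mathcal{J}\beta_{n+1-i}$ in $\IEnd(P_n)$, and more sharply Lemma~\ref{leGR1} furnishes $\gamma,\delta\in\PAut(P_n)$ with $\beta_i=\gamma\,\beta_{n+1-i}\,\delta$. For $i\in\{\lceil\frac{n}{2}\rceil+1,\ldots,n-1\}$ one has $2\leq n+1-i\leq n-\lceil\frac{n}{2}\rceil=\lfloor\frac{n}{2}\rfloor\leq\lceil\frac{n}{2}\rceil$, so $\beta_{n+1-i}\in\mathcal{B}$. Combining this with Proposition~\ref{prG1}, which gives $\PAut(P_n)=\langle\mathcal{A}\rangle\subseteq\langle\mathcal{B}\rangle$, yields $\gamma,\delta\in\langle\mathcal{B}\rangle$ and hence $\beta_i\in\langle\mathcal{B}\rangle$.

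No step here is genuinely difficult; the entire argument is a direct application of the machinery set up in Section~1. The only point that needs a small verification is the range check $2\leq n+1-i\leq\lceil\frac{n}{2}\rceil$, which ensures that $\beta_{n+1-i}$ is one of the generators already included in $\mathcal{B}$.
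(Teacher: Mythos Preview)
Your proof is correct but follows a genuinely different route from the paper. The paper dispatches the case $i\in\{\lceil\frac{n}{2}\rceil+1,\ldots,n-1\}$ with a single explicit identity,
\[
\beta_i=\tau\,\beta_{n-i+1}\,\alpha_n^*,
\]
where $\tau\in\mathcal{A}$, $\beta_{n-i+1}\in\mathcal{B}$ (same range check you did), and $\alpha_n^*\in\langle\mathcal{A}\rangle$ by Lemma~\ref{leG2}. This is verified by a direct computation and requires only Lemma~\ref{leG2} as input. Your argument instead observes that $\beta_i$ and $\beta_{n+1-i}$ have similar type and invokes Lemma~\ref{leGR1} to obtain abstract factors $\gamma,\delta\in\PAut(P_n)$, then appeals to Proposition~\ref{prG1} to place $\gamma,\delta$ in $\langle\mathcal{B}\rangle$. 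The paper's approach is more elementary and self-contained: it exhibits the factors explicitly and avoids citing the full generation result $\PAut(P_n)=\langle\mathcal{A}\rangle$. Your approach is more conceptual and illustrates nicely that the Green's-relation machinery of Section~1 can be used as a black box; the trade-off is that it relies on a heavier earlier result (Proposition~\ref{prG1}) where the paper gets by with Lemma~\ref{leG2} alone.
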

\begin{proof}
For $i = 2,\ldots,\lceil\frac{n}{2}\rceil$, we have $\beta_i \in \cal B$.
Let $i = \lceil\frac{n}{2}\rceil+1, \ldots, n-1$ then $\beta_i = \tau\beta_{n-i+1}\alpha_n^*$.
\end{proof}

\begin{proposition}\label{prG2}
Let $\beta \in \IEnd(P_n)\setminus \PAut(P_n)$. Then $\beta \in \langle \PAut(P_n) \cup \{\beta_i \mid i = 2,\ldots,n-1\} \rangle$.
\end{proposition}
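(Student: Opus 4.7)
The plan is to reduce, via Green's $\mathcal{J}$-relation, to a canonical representative in each $\mathcal{J}$-class, and then build that representative from elements of $\PAut(P_n)$ and from the $\beta_i$.

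By Lemma \ref{leGR1}, if two elements of $\IEnd(P_n)$ have similar type, they differ by left- and right-multiplication by elements of $\PAut(P_n)$. Thus it suffices to exhibit, for each type occurring in $\IEnd(P_n)\setminus\PAut(P_n)$, one canonical $\beta^{\ast}$ of that type contained in $\langle\PAut(P_n)\cup\{\beta_i\mid i=2,\ldots,n-1\}\rangle$. Given the type data $\tau_{\beta}(J_r)=(m_{r,1},\ldots,m_{r,p_r})$ for $r=1,\ldots,k'$, I would take $\dom\beta^{\ast}$ to consist of $k=p_1+\cdots+p_{k'}$ maximal intervals, in order, of sizes $m_{1,1},m_{1,2},\ldots,m_{k',p_{k'}}$, packed starting at position $1$ with single-element gaps between consecutive intervals; $\im\beta^{\ast}$ to consist of $k'$ maximal intervals of sizes $M_r=m_{r,1}+\cdots+m_{r,p_r}$, also packed with single-element gaps; and $\beta^{\ast}$ to act order-preservingly, mapping the $(p_1+\cdots+p_{r-1}+s)$-th maximal interval of $\dom\beta^{\ast}$ onto the $s$-th subinterval of the $r$-th maximal interval of $\im\beta^{\ast}$. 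Since $\dom\beta$ already occupies at least $|\dom\beta|+k-1$ positions inside $\overline{n}$, this tight packing fits inside $\overline{n}$; and by construction $\tau_{\beta^{\ast}}(J_r^{\ast})=\tau_{\beta}(J_r)$, so $\beta$ and $\beta^{\ast}$ have identical (hence similar) type.

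Next I would factor
\[
\beta^{\ast}\;=\;\id|_{\dom\beta^{\ast}}\cdot\beta_{h_1}\cdot\beta_{h_2}\cdots\beta_{h_E}, \qquad E=k-k',
\]
where $h_t$ is computed iteratively: at step $t$, let $h_t$ be the current position of the leftmost not-yet-collapsed gap of the current domain separating two maximal intervals whose images should lie inside a common maximal interval of $\im\beta^{\ast}$. Because $h_t$ is a gap, $\beta_{h_t}$ is applicable, and right-multiplication by $\beta_{h_t}$ fixes positions $<h_t$ and shifts positions $>h_t$ down by one, precisely collapsing that gap. After all $E$ collapses, the cumulative effect transforms the $\dom\beta^{\ast}$-layout into the $\im\beta^{\ast}$-layout exactly. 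An index count shows each $h_t\in\{2,\ldots,n-1\}$: the leftmost gap sits at position $\geq 2$ (the first maximal interval has size $\geq 1$), and every gap is followed by at least one element of the current domain, forcing $h_t\leq n-1$. Since $\id|_{\dom\beta^{\ast}}\in\PAut(P_n)$, this places $\beta^{\ast}$ in $\langle\PAut(P_n)\cup\{\beta_i\mid i=2,\ldots,n-1\}\rangle$.

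Finally, combining with Lemma \ref{leGR1} gives $\gamma,\delta\in\PAut(P_n)$ with $\beta=\gamma\beta^{\ast}\delta$, whence $\beta\in\langle\PAut(P_n)\cup\{\beta_i\mid i=2,\ldots,n-1\}\rangle$. The main obstacle is the careful bookkeeping in the iterative construction of the $h_t$, namely verifying that the current gap positions stay inside $\{2,\ldots,n-1\}$ and that the cumulative shifts really reassemble $\beta^{\ast}$; both reduce to a straightforward counting argument once the canonical packing is fixed.
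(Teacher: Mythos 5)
Your proposal is correct, but it is organized differently from the paper's proof. You normalize $\beta$ all the way to a canonical representative of its $\mathcal{J}$-class — tightly packed domain and image — and invoke Lemma \ref{leGR1} to write $\beta=\gamma\beta^{\ast}\delta$ with $\gamma,\delta\in\PAut(P_n)$, so that only the canonical $\beta^{\ast}$ needs to be factored as $\id|_{\dom\beta^{\ast}}\,\beta_{h_1}\cdots\beta_{h_E}$. The paper instead factors $\beta$ itself: it chooses a single $\delta\in\PAut(P_n)$ that packs only the \emph{image} (into $\{1,\ldots,|\im\beta|+\mathrm{m}_\beta-1\}$), sets $\bar\beta=\beta\delta$, identifies the positions $i_1,\ldots,i_k$ where adjacency in $\im\bar\beta$ is not witnessed by adjacency in $\dom\bar\beta$, "pulls the image apart" at those positions to get a map $\beta^{\ast}$ with the \emph{same domain as} $\beta$, proves $\beta^{\ast}\in\PAut(P_n)$ (the longest part of its argument), and verifies $\beta=\beta^{\ast}\beta_{i_1\bar\beta+1}\cdots\beta_{i_k\bar\beta+1}\delta^{-1}$ by induction. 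The underlying mechanism is identical in both proofs — right multiplication by $\beta_i$ deletes the gap at position $i$ from the image — but your route buys a reuse of the Green's-relations machinery of Section 1 (which the paper proves but never exploits here) and a very explicit canonical form, at the price of the extra normalization bookkeeping you acknowledge; the paper's route avoids normalizing the domain but must then establish directly that its $\beta^{\ast}$ is a partial automorphism. Two small points to tidy in your write-up: the gaps you collapse live in the image of the current partial product (which initially coincides with $\dom\beta^{\ast}$), not in its domain; and you should record that your $\beta^{\ast}$ lies in $\IEnd(P_n)$ with $\tau_{\beta^{\ast}}(J_r^{\ast})=\tau_\beta(J_r)$ so that Lemma \ref{leGR1} is actually applicable — both are immediate from your construction and the observations opening Section 1.
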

\begin{proof}
Let $\beta \in \IEnd(P_n)\setminus \PAut(P_n)$.
Then, it is easy to show that there exists a transformation $\delta \in \PAut(P_n)$ such that $\dom \delta = \im\beta$ and
$\im \delta \subseteq \{1,2,\ldots,|\im\beta|+\text{m}_\beta-1\}$, where $\text{m}_\beta$ is the number of the maximal intervals of $\im\beta$.

Define $\bar{\beta} = \beta\delta$. It is clear that $\dom\bar{\beta} = \dom\beta$.

Further, let $I$ be the set of all $x \in \dom \bar{\beta}$ such that
$x\bar{\beta}+1 \in \im\bar{\beta}$ and $x\bar{\beta}+1 \notin \{(x-1)\bar{\beta}, (x+1)\bar{\beta}\}$.
Clearly, $I \neq \emptyset$ since $\beta \notin \PAut(P_{n})$.
We let $I= \{i_1, \ldots, i_k\}$ be such that $i_1\bar{\beta} < i_2\bar{\beta} < \cdots < i_k\bar{\beta}$.

Let $X_1, X_2, \ldots, X_{k+1}$ be the partition of $\dom\bar{\beta}$ such that
$$
X_r = \{x \in \dom \bar{\beta} \mid i_{r-1}\bar{\beta} < x\bar{\beta} \leq i_r\bar{\beta}\},
$$
for $r \in \{1,2,\ldots,k+1\}$, where $i_0\bar{\beta} =0$ and $i_{k+1}\bar{\beta} = n$.

Let $\beta^*$ be the transformation defined by
$x\beta^* = x\bar{\beta} + r - 1$, for all $x \in X_r$ and $r = 1,2,\ldots,k+1$.
It is clear that $\dom\beta^* = \dom\bar{\beta} = \dom\beta$ and $\beta^* \in \IEnd(P_n)$.

First, we show $\beta^* \in \PAut(P_n)$.
Let $u\in\overline{n}$ be such that $u, u+1 \in \im \beta^*$.
Then there exist $a \in X_{r_1}$ and $b \in X_{r_2}$ such that $a\bar{\beta} + r_1 - 1 = u$ and $b\bar{\beta} + r_2 - 1 = u+1$,
for some $r_1, r_2 \in \{1,\ldots,k+1\}$. In order to obtain a contradiction, assume that $r_1 \neq r_2$.

Suppose that $r_1 < r_2$. Then $X_{r_1}\bar{\beta} < X_{r_2}\bar{\beta}$ and so $a\bar{\beta} < b\bar{\beta}$.
This implies $r_1 +1 \leq r_2$ and $a\bar{\beta} + 1 \leq b\bar{\beta}$, whence
$b\bar{\beta} + r_2 \geq a\bar{\beta} + 1 + r_1 + 1 = a\bar{\beta} + r_1 - 1 + 3 = u + 3 = b\bar{\beta} + r_2 - 1 + 2 = b\bar{\beta} + r_2 + 1$.
Thus $b\bar{\beta} \geq b\bar{\beta} + 1$, which is a contradiction.

On the other hand, suppose that $r_1 > r_2$. Then $X_{r_1}\bar{\beta} > X_{r_2}\bar{\beta}$ and so $a\bar{\beta} > b\bar{\beta}$.
This implies $r_1 \geq r_2+1$ and $a\bar{\beta} \geq b\bar{\beta} + 1$. Thus, we have
$a\bar{\beta} + r_1 \geq b\bar{\beta} + 1 + r_2 + 1 = b\bar{\beta} + r_2 - 1 + 3 = u + 1 + 3 = a\bar{\beta} + r_1 + 3$, whence
$a\bar{\beta} \geq a\bar{\beta} + 3$, which is a contradiction.

Therefore, we have $r_1 = r_2$. Then $u = a\bar{\beta} + r_1 - 1$, $u+1 = b\bar{\beta} + r_1 - 1$ and so $a, b \in X_{r_1}$.
This implies $b\bar{\beta} + r_1 = b\bar{\beta} + r_1 - 1 + 1 = u+1+1 = a\bar{\beta} + r_1 - 1 + 2 = a\bar{\beta} + r_1 + 1$, i.e.
$b\bar{\beta} = a\bar{\beta} + 1$. Thus $a\bar{\beta} + 1 \in \im \bar{\beta}$, since $b \in \dom\bar{\beta}$.
Assume $a\bar{\beta} + 1 \notin \{(a-1)\bar{\beta}, (a+1)\bar{\beta}\}$.
Then $a \in I$ and so $a=i_{r_1}$ and $b\bar{\beta} \leq a\bar{\beta}$, since $a, b \in X_{r_1}$.
Hence, $a\bar{\beta} + 1 = b\bar{\beta} \leq a\bar{\beta}$, which is a contradiction.
Thus, $b\bar{\beta} = a\bar{\beta} + 1 \in \{(a-1)\bar{\beta}, (a+1)\bar{\beta}\}$ and so we obtain $b \in \{a-1,a+1\}$.

This shows that $\beta^* \in \PAut(P_n)$.

Finally, we show that
$
\beta = \beta^*\beta_{i_1\bar{\beta}+1}\beta_{i_2\bar{\beta}+1}\cdots\beta_{i_k\bar{\beta}+1}\delta^{-1},
$
from which follows that
$$
\beta \in \langle \PAut(P_n) \cup \{\beta_i \mid i = 2,\ldots,n-1\}  \rangle.
$$
Since $\bar{\beta}\delta^{-1} = \beta\delta\delta^{-1} = \beta\id|_{\dom \delta} = \beta\id|_{\im \beta} = \beta$, it suffices to show that
$\bar{\beta} = \beta^*\beta_{i_1\bar{\beta}+1}\beta_{i_2\bar{\beta}+1}\cdots\beta_{i_k\bar{\beta}+1}$.

We proceed by showing that
$$
x\beta^*\beta_{i_1\bar{\beta}+1}\beta_{i_2\bar{\beta}+1}\cdots\beta_{i_s\bar{\beta}+1} =
\left\{\begin{array}{ll}
  x\bar{\beta} & \mbox{if } x \in X_1 \cup \cdots \cup X_s \\
  x\bar{\beta} + r - 1 - s & \mbox{if } x \in X_{r}, \mbox{ for some } r \in \{s+1, \ldots, k+1\},
\end{array}
\right.
$$
by induction on $1 \leq s \leq k$.

Let $s=1$. Then
$$
\begin{array}{rcl}
x\beta^*\beta_{i_1\bar{\beta}+1} &= &(x\bar{\beta} + r - 1)\beta_{i_1\bar{\beta}+1} \\\\
 & =  &
\left\{\begin{array}{ll}
         (x\bar{\beta})\beta_{i_1\bar{\beta}+1} = x\bar{\beta} & \mbox{if } x \in X_1, \mbox{ since } x\bar{\beta} < i_1\bar{\beta}+1 \\
         x\bar{\beta} + r - 1 - 1 = x\bar{\beta} + r - 1 - s  & \mbox{if } x \in X_r  \mbox{ for some } r > 1, \mbox{ since } x\bar{\beta} \geq i_1\bar{\beta}+1.
       \end{array}\right.
\end{array}
$$

Assume that the above expression is true for some $s < k$. We will prove it for $s+1$.

Let $x \in X_1 \cup \cdots \cup X_{s+1}$.

If $x \notin X_{s+1}$ then $x\beta^*\beta_{i_1\bar{\beta}+1}\cdots\beta_{i_s\bar{\beta}+1} = x\bar{\beta}$, by the induction hypothesis and $(x\bar{\beta})\beta_{i_{s+1}\bar{\beta}+1} = x\bar{\beta}$, since $x\bar{\beta} < i_{s+1}\bar{\beta}+1$.

If $x \in X_{s+1}$ then $x\beta^*\beta_{i_1\bar{\beta}+1}\cdots\beta_{i_s\bar{\beta}+1} = x\bar{\beta}+s+1-1-s = x\bar{\beta}$, by the induction hypothesis and $(x\bar{\beta})\beta_{i_{s+1}\bar{\beta}+1} = x\bar{\beta}$, since $x\bar{\beta} < i_{s+1}\bar{\beta}+1$.

Now, let  $x \in X_r$ for some $r \in \{s+2, \ldots, k+1\}$. Then
$$
(x\beta^*\beta_{i_1\bar{\beta}+1}\cdots\beta_{i_s\bar{\beta}+1})\beta_{i_{s+1}\bar{\beta}+1} = (x\bar{\beta}+r-1-s)\beta_{i_{s+1}\bar{\beta}+1} = (x\bar{\beta}+r-1-s)-1 = x\bar{\beta}+r-1-(s+1),
$$
since $x\bar{\beta} > i_{s+1}\bar{\beta}+1$, which completes the proof.
\end{proof}

From Proposition \ref{prG1}, Lemma \ref{leG7} and Proposition \ref{prG2}, we obtain immediately:

\begin{corollary}\label{coG1}
One has $\IEnd(P_n) = \langle \cal B \rangle$.
\end{corollary}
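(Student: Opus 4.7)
The statement is essentially a bookkeeping assembly of the three preceding results, so my plan is to chain them together rather than to prove anything new.

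First, I would establish the inclusion $\langle \mathcal{B} \rangle \subseteq \IEnd(P_n)$, which is automatic because $\mathcal{B} \subseteq \IEnd(P_n)$: each $\alpha_i$ and $\tau$ lies in $\PAut(P_n) \subseteq \IEnd(P_n)$, and each $\beta_i$ is by inspection an injective partial endomorphism of $P_n$ (the two intervals of its domain are mapped to adjacent intervals in an order-preserving way). Since $\IEnd(P_n)$ is a submonoid of $\I_n$, every product of elements of $\mathcal{B}$ lies in $\IEnd(P_n)$.

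For the reverse inclusion, I would split the argument according to whether $\beta \in \IEnd(P_n)$ lies in $\PAut(P_n)$ or not. If $\beta \in \PAut(P_n)$, then Proposition \ref{prG1} gives $\beta \in \langle \mathcal{A} \rangle \subseteq \langle \mathcal{B} \rangle$, since $\mathcal{A} \subseteq \mathcal{B}$. If instead $\beta \in \IEnd(P_n) \setminus \PAut(P_n)$, then Proposition \ref{prG2} yields
\[
\beta \in \langle \PAut(P_n) \cup \{\beta_i \mid i=2,\ldots,n-1\} \rangle.
\]
By Proposition \ref{prG1} again, $\PAut(P_n) \subseteq \langle \mathcal{A} \rangle \subseteq \langle \mathcal{B} \rangle$, and by Lemma \ref{leG7}, $\{\beta_i \mid i=2,\ldots,n-1\} \subseteq \langle \mathcal{B} \rangle$. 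Hence the generating set in the above display is contained in $\langle \mathcal{B} \rangle$, and so $\beta \in \langle \mathcal{B} \rangle$.

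Combining the two cases gives $\IEnd(P_n) \subseteq \langle \mathcal{B} \rangle$, and together with the reverse inclusion we conclude $\IEnd(P_n) = \langle \mathcal{B} \rangle$. There is no real obstacle here — all the genuine work has been done in Proposition \ref{prG1}, Lemma \ref{leG7} and Proposition \ref{prG2}; the only thing to be careful about is to note that $\mathcal{A} \subseteq \mathcal{B}$ so that Proposition \ref{prG1} can be invoked inside $\langle \mathcal{B} \rangle$, and to remark that the empty transformation (which appears in $\IEnd(P_n)$) is already covered since it lies in $\PAut(P_n)$ and hence is handled by the first case.
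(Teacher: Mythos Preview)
Your proposal is correct and matches the paper's approach exactly: the paper simply states that the corollary follows immediately from Proposition~\ref{prG1}, Lemma~\ref{leG7} and Proposition~\ref{prG2}, and your argument is precisely the natural unpacking of that claim.
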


Now, we will prove that $\cal B$ is a generating set of $\IEnd(P_n)$ of minimal size. We start by presenting a series of five lemmas.

Let $G'$ be a generating set of $\IEnd(P_n)$.

\begin{lemma}\label{leR6}
One has $|G' \cap(\IEnd(P_n)\setminus \PAut(P_n))| \geq \lceil\frac{n}{2}\rceil -1$.
\end{lemma}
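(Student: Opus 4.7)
The plan is to exhibit $\lceil n/2\rceil-1$ pairwise distinct $\mathcal{J}$-classes entirely contained in $\IEnd(P_n)\setminus\PAut(P_n)$, and argue that every generating set $G'$ of $\IEnd(P_n)$ must intersect each of them.

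For $i\in\{2,\ldots,\lceil n/2\rceil\}$, let $\mathcal{J}_i$ denote the $\mathcal{J}$-class of $\beta_i$ in $\IEnd(P_n)$. Since $\im\beta_i=\{1,\ldots,n-1\}$ is a single maximal interval of type $(i-1,n-i)$, Proposition~\ref{prGR3} characterises $\mathcal{J}_i$ by the unordered pair $\{i-1,n-i\}$. A direct check shows that these pairs are pairwise distinct for $i\in\{2,\ldots,\lceil n/2\rceil\}$, yielding $\lceil n/2\rceil-1$ pairwise disjoint $\mathcal{J}$-classes; since each corresponding type has length $\geq 2$, we have $\mathcal{J}_i\subseteq\IEnd(P_n)\setminus\PAut(P_n)$ for every such $i$.

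The core step is to prove $G'\cap\mathcal{J}_i\neq\emptyset$ for each $i$. Fix $i$ and write $\beta_i=g_1g_2\cdots g_k$ with $g_l\in G'$. Because $\rank\beta_i=n-1$ and rank is non-increasing along products, every partial product $\pi_l=g_1\cdots g_l$ and every factor $g_l$ has rank $n$ or $n-1$. A routine classification sorts such elements into three types: (I) $\id$ or $\tau$; (II) rank-$(n-1)$ partial automorphisms, whose domain and image share the same interval-count structure; (III) rank-$(n-1)$ elements of $\IEnd(P_n)\setminus\PAut(P_n)$, characterised by a two-interval domain $\overline{n}\setminus\{x\}$ (with $x\in\{2,\ldots,n-1\}$) and a single-interval image of size $n-1$. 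Since $\beta_i\notin\PAut(P_n)$ and $\PAut(P_n)$ is closed under composition, at least one $g_j$ is of type (III).

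Letting $j$ be minimal with $\pi_j\in\IEnd(P_n)\setminus\PAut(P_n)$, we then get $\pi_{j-1}\in\PAut(P_n)$ and $g_j$ of type (III). For each subsequent $l>j$, a cardinality check rules out another type-(III) factor: $\im\pi_{l-1}$ is a single interval of size $n-1$ while the domain of any type-(III) element has two maximal intervals, so the identity $\im\pi_{l-1}=\dom g_l$ (needed to keep $\rank\pi_l=n-1$) forces $g_l$ into types (I) or (II), i.e., $g_l\in\PAut(P_n)$. Tracking types through the remaining factors then shows $g_j\,\mathcal{J}\,\pi_j\,\mathcal{J}\,\pi_{j+1}\,\mathcal{J}\,\cdots\,\mathcal{J}\,\pi_k=\beta_i$: the key observation is that multiplying a type-(III) element on either side by a PAut of compatible rank only permutes and possibly reverses the sequence of sizes of maximal intervals involved, leaving the unordered type pair invariant. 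Hence $g_j\in\mathcal{J}_i$, and since the $\mathcal{J}_i$ are pairwise disjoint subsets of $\IEnd(P_n)\setminus\PAut(P_n)$, we conclude $|G'\cap(\IEnd(P_n)\setminus\PAut(P_n))|\geq\lceil n/2\rceil-1$.

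The main obstacle is the type-invariance step under left and right multiplication by $\PAut(P_n)$, which requires carefully unpacking the definition of similar type from Proposition~\ref{prGR3}; the rank-based impossibility of two consecutive type-(III) factors and the rank-classification of factors are comparatively routine once the single-interval-versus-two-interval image/domain dichotomy is made explicit.
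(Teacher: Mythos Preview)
Your argument is correct, but it takes a longer route than the paper's. Both proofs factor $\beta_i=g_1\cdots g_k$ in $G'$ and locate the first factor $g_j\notin\PAut(P_n)$, using that the prefix $\pi_{j-1}=g_1\cdots g_{j-1}$ lies in $\PAut(P_n)$. From here the paper proceeds more directly: since $\dom\beta_i\subseteq\dom\pi_{j-1}$ and $\pi_{j-1}\in\PAut(P_n)$ has rank $n-1$ or $n$, one reads off immediately that $\dom g_j=\im\pi_{j-1}\in\{\overline n\setminus\{i\},\,\overline n\setminus\{n-i+1\}\}$. The domain alone then distinguishes the generators for the $\lceil n/2\rceil-1$ values of $i$, and no analysis of the tail $g_{j+1}\cdots g_k$ or of $\mathcal{J}$-classes is needed.

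Your approach instead invokes the description of $\mathcal{J}$ from Proposition~\ref{prGR3}, proves inductively that every $g_l$ with $l>j$ lies in $\PAut(P_n)$ (via the single-interval image of $\pi_{l-1}$), and then tracks the similar-type invariant through the chain $g_j\mathcal{J}\pi_j\mathcal{J}\cdots\mathcal{J}\beta_i$. This is sound and makes pleasant use of the Green's-relations machinery developed earlier, but the type-preservation step under left/right multiplication by $\PAut(P_n)$ is exactly where the work lies, and it ultimately recovers the same domain information the paper obtains in one line. In short: your proof buys a more structural viewpoint (the $\mathcal{J}_i$ are genuine obstructions any generating set must meet), while the paper's buys brevity.
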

\begin{proof}
Let $2\leq j \leq n-1$.
Then $\beta_j = \gamma_1\cdots\gamma_k$, for some $k\geq 1$ and $\gamma_1, \ldots, \gamma_k \in G'$.
As $\rank\beta_j = n-1$ then $\rank\gamma_i \geq n-1$, for all $i=1,\ldots,k$, and there exists $i \in \{1,\ldots,k\}$ such that $\gamma_i \notin \PAut(P_n)$. Let $i$ be the least $r \in \{1, \ldots, k\}$ such as $\gamma_r \notin \PAut(P_n)$. Let $\gamma = \gamma_1\cdots\gamma_{i-1} \in \PAut(P_n)$ (with $\gamma = \id$ if $i=1$). Thus, $\beta_j = \gamma\gamma_i\cdots\gamma_k$ implies $\gamma^{-1}\beta_j = \gamma^{-1}\gamma\gamma_i\cdots\gamma_k = \gamma_i\cdots\gamma_k$ (since $\gamma^{-1}\gamma = \id|_{\dom\gamma_i}$).

We have $\rank\gamma = n-1$ or $\rank\gamma = n$. If $\rank\gamma = n-1$ then $\im\gamma^{-1} = \dom\beta_j = \{1,\ldots,n\}\setminus\{j\}$. Thus $\dom\gamma = \{1,\ldots,n\}\setminus\{j\}$, whence $\im\gamma = \dom\beta_j$ or $\im\gamma = \dom\beta_{n-j+1}$, and so $\dom\gamma_i = \im\gamma = \dom\beta_j$ or $\dom\gamma_i = \im\gamma = \dom\beta_{n-j+1}$ (since $\rank\gamma_i = n-1$). If $\rank\gamma = n$ then $\gamma = \id$ or $\gamma = \tau$. If $\gamma = \id$ then $\dom\gamma_i = \dom\beta_j$. If $\gamma = \tau$ then $\dom\gamma_i = \dom\beta_{n-j+1}$. Note that $n-j+1 \geq \lceil\frac{n}{2}\rceil$.

Therefore, we must have in $G'$ at least $\lceil\frac{n-2}{2}\rceil = \lceil\frac{n}{2}\rceil -1$ distinct elements of $\IEnd(P_n)\setminus \PAut(P_n)$.
\end{proof}

\begin{lemma}\label{leR4}
Let $\alpha \in \IEnd(P_n)$ be such that $\dom\alpha \in \{\{1,\ldots,n-1\}, \{2,\ldots,n\}\}$.
Then $\alpha \in \PAut(P_n)$.
\end{lemma}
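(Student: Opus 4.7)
The plan is to invoke the characterizations of $\IEnd(P_n)$ and $\PAut(P_n)$ in terms of maximal intervals, recorded at the beginning of Section~1, and observe that both candidate domains are single intervals, which forces the image to be an interval of cardinality $n-1$, hence one of the same two sets.

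First, I would note that the two sets $\{1,\ldots,n-1\}$ and $\{2,\ldots,n\}$ are themselves intervals of $\overline{n}$, so each consists of a unique maximal interval, namely the whole set. Let $\alpha\in\IEnd(P_n)$ with $\dom\alpha\in\{\{1,\ldots,n-1\},\{2,\ldots,n\}\}$, and let $I=\dom\alpha$ be this unique maximal interval. By the first observation about $\IEnd(P_n)$ in Section~1, $I\alpha=\im\alpha$ is an interval of $\im\alpha$, and since $\alpha$ is injective, $|\im\alpha|=n-1$. An interval of $\overline{n}$ of cardinality $n-1$ must be either $\{1,\ldots,n-1\}$ or $\{2,\ldots,n\}$, so $\im\alpha$ itself is a single maximal interval of $\im\alpha$.

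Second, I would apply the sufficient condition for membership in $\PAut(P_n)$: if for each maximal interval $I$ of $\dom\alpha$ the set $I\alpha$ is a maximal interval of $\im\alpha$ and $\alpha|_I$ is order-preserving or order-reversing, then $\alpha\in\PAut(P_n)$. In the present situation there is a single maximal interval $I=\dom\alpha$, its image $I\alpha=\im\alpha$ is the single maximal interval of $\im\alpha$ by the previous paragraph, and $\alpha|_I$ is order-preserving or order-reversing by the second observation about $\IEnd(P_n)$ in Section~1. This forces $\alpha\in\PAut(P_n)$.

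I do not foresee any real obstacle here; the argument is a direct application of the interval-based characterizations recalled in Section~1, once one notices that the hypothesis on $\dom\alpha$ collapses everything to a single maximal interval on both the domain and image sides.
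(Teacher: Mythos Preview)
Your proof is correct. The paper takes a different, more concrete route: it simply observes that any $\alpha\in\IEnd(P_n)$ with domain $\{1,\ldots,n-1\}$ or $\{2,\ldots,n\}$ must be one of the eight transformations $\alpha_1,\ \tau\alpha_1,\ \alpha_1\tau,\ \tau\alpha_1\tau,\ \alpha_n,\ \tau\alpha_n,\ \alpha_n\tau,\ \tau\alpha_n\tau$, all of which visibly lie in $\PAut(P_n)$. Your argument instead works abstractly via the interval characterizations of $\IEnd(P_n)$ and $\PAut(P_n)$ recorded at the start of Section~1: the domain is a single interval, hence the image is an interval of size $n-1$, hence also a single maximal interval, and the monotonicity of $\alpha$ on that interval then triggers the sufficient condition for $\PAut(P_n)$. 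The paper's enumeration makes the eight concrete forms of $\alpha$ explicit (which is mildly informative in the surrounding argument), while your approach is shorter and would generalize immediately to any $\alpha\in\IEnd(P_n)$ whose domain is a single interval.
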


\begin{proof}
It is a routine matter to verify that
$\alpha \in \{\alpha_{1}, \tau\alpha_{1}, \alpha_{1}\tau, \tau\alpha_{1}\tau, \alpha_{n}, \tau\alpha_{n}, \alpha_{n}\tau, \tau\alpha_{n}\tau\}
\subseteq \PAut(P_{n})$.
\end{proof}

\begin{lemma}\label{leR5}
If $\alpha \in \IEnd(P_n)\setminus \PAut(P_n)$ has rank equal $n-1$ then $\im \alpha \in \{\{1,\ldots,n-1\}, \{2,\ldots,n\}\}$.
\end{lemma}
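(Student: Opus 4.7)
My plan is to argue by contrapositive, or equivalently to suppose that $\im\alpha = \overline{n}\setminus\{j\}$ for some $2\le j\le n-1$ and derive that $\alpha\in\PAut(P_n)$, contradicting the hypothesis. Since $\alpha$ has rank $n-1$, write $\dom\alpha = \overline{n}\setminus\{i\}$ as well. The shape of the argument will depend on whether $i$ is an endpoint of $\overline{n}$ or an interior point.

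If $i\in\{1,n\}$, then $\dom\alpha\in\{\{1,\ldots,n-1\},\{2,\ldots,n\}\}$ and Lemma \ref{leR4} gives $\alpha\in\PAut(P_n)$ immediately, a contradiction. So I may assume $2\le i\le n-1$, in which case $\dom\alpha$ has exactly the two maximal intervals $I_1=\{1,\ldots,i-1\}$ and $I_2=\{i+1,\ldots,n\}$, and $\im\alpha$ has exactly the two maximal intervals $J_1=\{1,\ldots,j-1\}$ and $J_2=\{j+1,\ldots,n\}$.

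The key step is to exploit the fact that, since $\alpha\in\IEnd(P_n)$, the images $I_1\alpha$ and $I_2\alpha$ are intervals contained in $\im\alpha=J_1\cup J_2$. Because $J_1$ and $J_2$ are separated by the missing vertex $j$, any interval of $\im\alpha$ lies entirely in $J_1$ or entirely in $J_2$. The possibility that $I_1\alpha$ and $I_2\alpha$ both land in the same $J_r$ is ruled out by a counting argument: their union would have at most $|J_r|\le n-2$ elements, while by injectivity $|I_1\alpha|+|I_2\alpha|=n-1$. Hence, after possibly swapping, $I_1\alpha\subseteq J_1$ and $I_2\alpha\subseteq J_2$; now $|I_1|+|I_2|=n-1=|J_1|+|J_2|$ together with $|I_k|\le|J_k|$ forces $I_1\alpha=J_1$ and $I_2\alpha=J_2$.

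Thus each maximal interval of $\dom\alpha$ is sent bijectively onto a maximal interval of $\im\alpha$, and by the second bullet of the observations stated just before Proposition \ref{prGR1}, the restriction of $\alpha$ to each $I_k$ is order-preserving or order-reversing. The fifth bullet of that same list then yields $\alpha\in\PAut(P_n)$, the desired contradiction. I do not expect any serious obstacle here; the only place requiring some care is the counting step that forbids $I_1\alpha$ and $I_2\alpha$ from sharing a side of the gap at $j$, and the appeal to the preliminary characterization of $\PAut(P_n)$ via maximal intervals.
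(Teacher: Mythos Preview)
Your argument is correct, and it takes a genuinely different route from the paper's proof. The paper proceeds by first asserting that every $\alpha\in\IEnd(P_n)\setminus\PAut(P_n)$ of rank $n-1$ lies in the explicit list $\{\beta_i,\tau\beta_i,\beta_i\tau,\tau\beta_i\tau\mid i=2,\ldots,n-1\}$, and then simply reads off the image from that description. Your proof instead argues by contrapositive: assuming the image has an interior gap at $j$, you use the observation that each $I_k\alpha$ is a contiguous interval of $\im\alpha$, rule out both landing on the same side of $j$ by counting, and conclude that maximal intervals of the domain go to maximal intervals of the image, so $\alpha\in\PAut(P_n)$ via the characterisation preceding Proposition~\ref{prGR1}. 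Your approach is more self-contained and structural---it avoids the implicit enumeration of all rank-$(n-1)$ elements of $\IEnd(P_n)\setminus\PAut(P_n)$---while the paper's approach, once that enumeration is granted, yields slightly more information (the precise form of every such $\alpha$). One cosmetic point: where you write ``after possibly swapping, $I_1\alpha\subseteq J_1$ and $I_2\alpha\subseteq J_2$'', it is cleaner to say that $I_1\alpha$ and $I_2\alpha$ lie in \emph{distinct} $J_r$'s, since the labelling need not match; the counting argument $|I_1|+|I_2|=|J_1|+|J_2|$ then still forces $\{I_1\alpha,I_2\alpha\}=\{J_1,J_2\}$, which is all you need.
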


\begin{proof}
Since $\rank\alpha = n-1$, we can conclude that $\alpha \in \{\beta_{i}, \tau\beta_{i}, \beta_{i}\tau, \tau\beta_{i}\tau \mid i=2,\ldots,n-1\}$.
Let $i=2,\ldots,n-1$.
Since $\im\beta_{i} = \{1,\ldots,n-1\}$ and $\dom\tau = \im\tau = \overline{n}$, we obtain
$$
\im\beta_{i}, \im(\tau\beta_{i}), \im(\beta_{i}\tau), \im(\tau\beta_{i}\tau) \in \{\{1,\ldots,n-1\},\{2,\ldots,n\}\},
$$
whence $\im\alpha \in \{\{1,\ldots,n-1\},\{2,\ldots,n\}\}$, as required.
\end{proof}

\begin{lemma}\label{leR7}
One has $\langle G' \cap \PAut(P_n)\rangle = \PAut(P_n)$.
\end{lemma}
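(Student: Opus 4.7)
My plan is to invoke Proposition~\ref{prG1} and reduce the claim to showing $\mathcal{A}\subseteq\langle G'\cap\PAut(P_n)\rangle$. The containment $\tau\in G'\cap\PAut(P_n)$ is easy: since $\tau\in\IEnd(P_n)=\langle G'\rangle$, in any $G'$-expression $\tau=\gamma_1\cdots\gamma_k$, the equality $\dom\tau=\overline{n}$ forces $\dom\gamma_i=\overline{n}$, hence $\gamma_i\in\{\id,\tau\}$; as no product of identities equals $\tau$, we must have $\tau\in G'$, and trivially $\tau\in\PAut(P_n)$.

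The core of the argument is an auxiliary claim that I would prove first: if $\beta\in\PAut(P_n)$ has rank $n-1$ and $\beta=\gamma_1\gamma_2\cdots\gamma_k$ with $\gamma_j\in\IEnd(P_n)$, then $\gamma_1\in\PAut(P_n)$. The proof splits on $|\dom\gamma_1|\in\{n-1,n\}$: the rank $n$ case yields $\gamma_1\in\{\id,\tau\}$, while the rank $n-1$ case forces $\dom\gamma_1=\dom\beta$, at which point either Lemma~\ref{leR4} applies directly (when $\dom\beta\in\{\{1,\ldots,n-1\},\{2,\ldots,n\}\}$) or $\dom\beta$ splits into two maximal intervals $I_1,I_2$. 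In the two-interval subcase, if one assumed $\gamma_1\notin\PAut(P_n)$, then $I_1\gamma_1\cup I_2\gamma_1$ would be a single interval of $\im\gamma_1$; since $\gamma_2\cdots\gamma_k\in\IEnd(P_n)$ maps intervals of its domain to intervals of its image, $\im\beta=(I_1\gamma_1\cup I_2\gamma_1)(\gamma_2\cdots\gamma_k)$ would then be a single interval, contradicting that the image of the partial automorphism $\beta$ has two maximal intervals.

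For each $\alpha_i\in\mathcal{A}\setminus\{\tau\}$ (all of rank $n-1$), I would then fix a shortest $G'$-expression $\alpha_i=\gamma_1\cdots\gamma_k$ and suppose, for contradiction, that some $\gamma_{j^*}\notin\PAut(P_n)$ with $j^*$ least. The auxiliary claim rules out $j^*=1$, so $j^*\geq 2$ and $\delta=\gamma_1\cdots\gamma_{j^*-1}\in\PAut(P_n)$ has rank $n$ or $n-1$. When $\rank\delta=n$, $\delta\in\{\id,\tau\}$; the only subcase not immediately shortened by replacing the prefix with $\tau$ or dropping $\id$ is $\gamma_1=\tau$, $j^*=2$, and then the auxiliary claim applied to the rank $n-1$ partial automorphism $\tau\alpha_i=\gamma_2\cdots\gamma_k$ forces $\gamma_2\in\PAut(P_n)$, a contradiction. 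When $\rank\delta=n-1$, $\dom\delta=\dom\alpha_i$, and comparing $\dom(\gamma_{j^*}\cdots\gamma_k)$ with $\im\delta$: equality forces $\gamma_{j^*}\cdots\gamma_k=\delta^{-1}\alpha_i\in\PAut(P_n)$ of rank $n-1$, and the auxiliary claim yields $\gamma_{j^*}\in\PAut(P_n)$; strict containment forces $\gamma_{j^*}\cdots\gamma_k\in\{\id,\tau\}$, producing either a shorter expression (contradicting minimality) or the equality $\gamma_{j^*}=\tau\in\PAut(P_n)$. The main obstacle is precisely this delicate case analysis, making sure the recursive application of the auxiliary claim to $\tau\alpha_i$ and $\delta^{-1}\alpha_i$ (both rank $n-1$ partial automorphisms to which the claim applies) closes every branch without circularity.
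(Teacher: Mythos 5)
Your proof is correct, and although it shares the paper's skeleton (reduce via Proposition~\ref{prG1} to showing $\mathcal{A}\subseteq\langle G'\cap\PAut(P_n)\rangle$, then locate the first factor outside $\PAut(P_n)$ and derive a contradiction), the mechanism of the contradiction is genuinely different. The paper strips off the $\PAut(P_n)$-prefix $\gamma$, notes that the suffix $\lambda=\gamma_{i+1}\cdots\gamma_k$ has domain $\im\gamma_i$, which Lemma~\ref{leR5} forces to be $\{1,\ldots,n-1\}$ or $\{2,\ldots,n\}$, concludes $\lambda\in\PAut(P_n)$ by Lemma~\ref{leR4}, and then sandwiches $\gamma_i=\gamma^{-1}\alpha\lambda^{-1}\in\PAut(P_n)$; this needs no minimality assumption and applies uniformly to every rank-$(n-1)$ element of $\PAut(P_n)$. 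You instead prove a ``first factor'' claim by propagating an interval count forward: a rank-$(n-1)$ factor outside $\PAut(P_n)$ fuses the two maximal intervals of its domain into a single interval in its image, and since elements of $\IEnd(P_n)$ send intervals of their domains to intervals, the final image would be one interval, contradicting the two-component image of the target. This is in effect a re-derivation of the content of Lemma~\ref{leR5} in a different guise, and it costs you an induction on the length of a shortest expression to peel off prefixes and to dispose of the degenerate cases $\delta\in\{\id,\tau\}$. Two small imprecisions, neither fatal: in the two-interval subcase you should record that $\im\gamma_1\subseteq\dom(\gamma_2\cdots\gamma_k)$ (forced by $\rank\beta=n-1$) before applying the interval-image observation; and in the final branch, if $\gamma_{j^*}\cdots\gamma_k=\tau$ with $j^*<k$ you do not literally get $\gamma_{j^*}=\tau$, but $\dom\gamma_{j^*}=\overline{n}$ still forces $\gamma_{j^*}\in\{\id,\tau\}\subseteq\PAut(P_n)$, which is the contradiction you need.
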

\begin{proof}
First, notice that it is clear that $\tau\in G'$.

On the other hand, let $\alpha$ be any transformation of $\PAut(P_n)$ with $\rank\alpha = n-1$.

Then, there exist $\gamma_1,\ldots,\gamma_k \in G'$ such that
$\alpha=\gamma_1\cdots\gamma_k$ ($k\geq1$). Assume that there exists $i \in \{1,\ldots,k\}$ such that $\gamma_i \notin \PAut(P_n)$. Let $i$ be the least index $r \in \{1,\ldots,k\}$ such that $\gamma_r \notin \PAut(P_n)$ and let $\gamma = \gamma_1\cdots\gamma_{i-1} \in \PAut(P_n)$ (with $\gamma = \id$ if $i=1$). Then $\alpha = \gamma\gamma_i\cdots\gamma_k$ implies $\gamma_i\cdots\gamma_k = \gamma^{-1}\gamma\gamma_i\cdots\gamma_k = \gamma^{-1}\alpha \in \PAut(P_n)$ (since $\gamma^{-1}\gamma = \id|_{\dom\gamma_i}$). Hence, we have $i<k$.
We have $\gamma_{i+1}\cdots\gamma_k \notin \{\id,\tau\}$
(otherwise $\gamma_i = \gamma^{-1}\alpha(\gamma_{i+1}\cdots\gamma_k)^{-1} \in \PAut(P_n)$, which is a contradiction). Hence,
$\rank(\gamma_{i+1}\cdots\gamma_k) = n-1$. Let $\lambda = \gamma_{i+1}\cdots\gamma_k$. Then $\dom \lambda = \im \gamma_i \in \{\{1,\ldots,n-1\},\{2,\ldots,n\}\}$, by Lemma \ref{leR5}. Therefore, we obtain $\lambda \in \PAut(P_n)$, by Lemma \ref{leR4}. Thus,
$\gamma^{-1}\alpha = \gamma_i\lambda$ implies that $\gamma_i = \gamma^{-1}\alpha\lambda^{-1} \in \PAut(P_n)$, which is a contradiction.
Thus, $\gamma_{1},\ldots,\gamma_k \in \PAut(P_n)$.

Therefore, in particular, we showed that $\mathcal{A}\subseteq \langle G' \cap \PAut(P_n)\rangle$ and so $\langle G' \cap \PAut(P_n)\rangle = \PAut(P_n)$,
by Proposition \ref{prG1}.
\end{proof}

Now, as an immediate consequence of Lemma \ref{leR7} and Theorem \ref{thR1}, we have:

\begin{lemma}\label{leR8}
One has $|G' \cap \PAut(P_n)|\geq \left\{
\begin{array}{ll}
3 & \text{if } n=3 \\
n-1 & \text{if } n\geq 4.
\end{array}%
\right.
$
\end{lemma}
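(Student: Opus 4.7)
The plan is essentially to chain together the two results cited in the statement. By Lemma \ref{leR7}, the set $G' \cap \PAut(P_n)$ is itself a generating set of the monoid $\PAut(P_n)$. Therefore, its cardinality must be at least $\rank \PAut(P_n)$, simply by the definition of rank as the minimum size of a generating set.

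The second ingredient, Theorem \ref{thR1}, supplies the exact value of this rank: it is $3$ when $n=3$ and $n-1$ when $n\geq 4$. Combining the two observations gives immediately
\[
|G' \cap \PAut(P_n)| \geq \rank \PAut(P_n) =
\begin{cases}
3 & \text{if } n=3 \\
n-1 & \text{if } n\geq 4,
\end{cases}
\]
which is exactly the asserted inequality.

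Since both inputs are already proven in the excerpt, there is no genuine obstacle here; the lemma is a packaging step whose only role is to make visible the lower bound contribution of the partial automorphism part of any generating set of $\IEnd(P_n)$. The writeup should therefore be just a couple of sentences citing Lemma \ref{leR7} and Theorem \ref{thR1}, with the two cases displayed for the reader's convenience. One small point worth double-checking while writing is that Lemma \ref{leR7} was proved for arbitrary generating sets $G'$ of $\IEnd(P_n)$ without any extra hypothesis on $n$ (it was), so no case distinction is needed before applying it, and the case split $n=3$ versus $n\geq4$ enters only through the final invocation of Theorem \ref{thR1}.
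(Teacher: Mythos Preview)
Your proposal is correct and matches the paper's own approach exactly: the paper states the lemma as an immediate consequence of Lemma~\ref{leR7} and Theorem~\ref{thR1}, without even giving a separate proof. Your explanation fills in precisely the intended reasoning.
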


Finally, we conclude with the presentation of our second main result.

\begin{theorem}\label{thR2}
The rank of $\IEnd(P_3)$ is equal to $4$ and, for $n \geq 4$, the rank of $\IEnd(P_n)$ is equal to $n + \lceil\frac{n}{2}\rceil-2$.
\end{theorem}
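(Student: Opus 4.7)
The plan is to combine the upper bound coming from the generating set $\mathcal{B}$ (Corollary \ref{coG1}) with a matching lower bound obtained by splitting any generating set $G'$ into its intersections with $\PAut(P_n)$ and with $\IEnd(P_n)\setminus\PAut(P_n)$, then invoking the counts already established in Lemmas \ref{leR6} and \ref{leR8}.

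First I would treat the upper bound. By Corollary \ref{coG1}, $\IEnd(P_n)=\langle\mathcal{B}\rangle$, so $\rank\IEnd(P_n)\le|\mathcal{B}|$. A direct count gives
\[
|\mathcal{B}|=|\mathcal{A}|+\left(\left\lceil\tfrac{n}{2}\right\rceil-1\right)=
\begin{cases}
3+1=4 & \text{if }n=3,\\[2pt]
(n-1)+\left(\left\lceil\tfrac{n}{2}\right\rceil-1\right)=n+\left\lceil\tfrac{n}{2}\right\rceil-2 & \text{if }n\ge 4,
\end{cases}
\]
using $|\mathcal{A}|=3$ for $n=3$ and $|\mathcal{A}|=n-1$ for $n\ge 4$, together with the fact that the $\beta_i$ added to $\mathcal{A}$ range over $i=2,\ldots,\lceil n/2\rceil$.

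For the lower bound, let $G'$ be any generating set of $\IEnd(P_n)$. The sets $\PAut(P_n)$ and $\IEnd(P_n)\setminus\PAut(P_n)$ are disjoint, so
\[
|G'|=|G'\cap\PAut(P_n)|+|G'\cap(\IEnd(P_n)\setminus\PAut(P_n))|.
\]
By Lemma \ref{leR8}, the first term is at least $3$ when $n=3$ and at least $n-1$ when $n\ge 4$; by Lemma \ref{leR6}, the second term is at least $\lceil n/2\rceil-1$ in both cases. Adding these estimates yields $|G'|\ge 4$ for $n=3$ and $|G'|\ge n+\lceil n/2\rceil-2$ for $n\ge 4$, matching the upper bound.

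Since both lemmas used here have been proved, there is no real obstacle remaining; the only point requiring care is noting that the two lower-bound lemmas apply to \emph{disjoint} parts of $G'$, so the two counts add without overlap. Combining the two inequalities gives the desired equalities and completes the proof.
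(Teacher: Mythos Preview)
Your proof is correct and follows essentially the same approach as the paper: the upper bound via $|\mathcal{B}|$ from Corollary~\ref{coG1}, and the lower bound by combining Lemmas~\ref{leR6} and~\ref{leR8} on the disjoint pieces $G'\cap\PAut(P_n)$ and $G'\cap(\IEnd(P_n)\setminus\PAut(P_n))$. Your explicit remark that the two lower-bound counts concern disjoint subsets of $G'$ is a nice clarification that the paper leaves implicit.
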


\begin{proof}
By Corollary \ref{coG1}, we have
$$
\rank \IEnd(P_{n}) \leq \left\vert \mathcal{B}\right\vert = \left\{
\begin{array}{ll}
3+1=4 & \text{if } n=3 \\
n-1+\lceil \frac{n}{2} \rceil - 1 = n+\lceil \frac{n}{2} \rceil - 2 & \text{if } n \geq 4.
\end{array}%
\right.
$$
On the other hand, by Lemmas \ref{leR6} and \ref{leR8}, we have
$$
\rank \IEnd(P_{n}) \geq \left\{
\begin{array}{ll}
3+1=4 & \text{if } n=3 \\
n-1+\lceil \frac{n}{2} \rceil - 1 = n+\lceil \frac{n}{2} \rceil - 2 & \text{if } n \geq 4,
\end{array}%
\right.
$$
as required.
\end{proof}

\section*{Acknowledgement}


This work was produced, in part, during the visit of the first and third authors to CMA, FCT NOVA, Lisbon, in July 2019.
The first author was supported by CMA through a visiting researcher fellowship.



{\small \sf
\noindent{\sc Ilinka Dimitrova},
Department of Mathematics,
Faculty of Mathematics and Natural Science,
South-West University "Neofit Rilski",
2700 Blagoevgrad,
Bulgaria;
e-mail: ilinka\_dimitrova@swu.bg.

\medskip

\noindent{\sc V\'\i tor H. Fernandes},
CMA, Departamento de Matem\'atica,
Faculdade de Ci\^encias e Tecnologia,
Universidade NOVA de Lisboa,
Monte da Caparica,
2829-516 Caparica,
Portugal;
e-mail: vhf@fct.unl.pt.

\medskip

\noindent{\sc J\"{o}rg Koppitz},
Institute of Mathematics and Informatics,
Bulgarian Academy of Sciences,
1113 Sofia,
Bulgaria;
e-mail: koppitz@math.bas.bg.

\medskip

\noindent{\sc Teresa M. Quinteiro},
Instituto Superior de Engenharia de Lisboa,
1950-062 Lisboa,
Portugal.
Also:
Centro de Matem\'atica e Aplica\c{c}\~oes,
Faculdade de Ci\^encias e Tecnologia,
Universidade Nova de Lisboa,
Monte da Caparica,
2829-516 Caparica,
Portugal;
e-mail: tmelo@adm.isel.pt.
}


\begin{thebibliography}{00}

\bibitem{AlKharousi&Kehinde&Umar:2014s}
F. Al-Kharousi, R. Kehinde and A. Umar,
On the semigroup of partial isometries of a finite chain,
Communications in Algebra, 44 (2016), 639--647.

\bibitem{Araujo&al:2015}
J. Ara\'ujo,  W. Bentz, J.D. Mitchell and C. Schneider,
The rank of the semigroup of transformations stabilising a partition of a finite set,
Mathematical Proceedings of the Cambridge Philosophical Society,
159 (2015), 339--353.

\bibitem{Arworn:2009}
S. Arworn,
An algorithm for the numbers of endomorphisms on paths,
Discrete Mathematics, 309 (2009), 94--103.

\bibitem{Arworn&Knauer&Leeratanavalee:2008}
S. Arworn, U. Knauer and S. Leeratanavalee,
Locally Strong Endomorphisms of Paths,
Discrete Mathematics, 308 (2008), 2525--2532.

\bibitem{Cicalo&al:2015}
S. Cical\`o, V.H. Fernandes and C. Schneider,
Partial transformation monoids preserving a uniform partition,
Semigroup Forum, 90 (2015), 532--544.

\bibitem{Dimitrova&Fernandes&Koppitz&Quinteiro:2019online}
I. Dimitrova, V. H. Fernandes, J. Koppitz and T. M. Quinteiro,
Ranks of monoids of endomorphisms of a finite undirected path,
Bulletin of the Malaysian Mathematical Sciences Society, 43 (2020), 1623--1645.

\bibitem{Fan:1993}
S. Fan,
On End-regular bipartite graphs,
Combinatorics and Graph Theory, Proceedings of SSICC's92, World Scientific, Singapore, (1993), 117--130.

\bibitem{Fan:1997}
S. Fan,
The regularity of the endomorphism monoid of a split graph,
Acta Math. Sin., 40 (1997), 419--422.

\bibitem{Fan:2002}
S. Fan,
Retractions of split graphs and End-orthodox split graphs,
Discrete Mathematics, 257 (2002), 161--164.

\bibitem{Fernandes:2002}
V.H. Fernandes.
 Presentations for some monoids of partial transformations on a finite chain: a survey,
 In  Semigroups, algorithms, automata and languages (Coimbra,
  2001), pages 363--378. World Sci. Publ., River Edge, NJ, 2002.

 \bibitem{Fernandes&al:2014}
V.H. Fernandes, P. Honyam, T.M. Quinteiro and B. Singha,
On semigroups of endomorphisms of a chain with restricted range,
Semigroup Forum, 89 (2014), 77--104.

\bibitem{Fernandes&al:2016}
V.H. Fernandes, P. Honyam, T.M. Quinteiro and B. Singha,
On semigroups of orientation-preserving transformations with restricted range,
Communications in Algebra, 44 (2016), 253-264.

\bibitem{Fernandes&al:2018ip}
V.H. Fernandes, J. Koppitz and T. Musunthia,
The rank of the semigroup of all order-preserving transformations on a finite fence,
Bulletin of the Malaysian Mathematical Sciences Society, 42 (2019), 2191--2211.

\bibitem{Fernandes&Quinteiro:2014}
V.H. Fernandes and T.M. Quinteiro,
On the ranks of certain monoids of transformations that preserve a uniform partition,
Communications in Algebra, 42 (2014), 615--636.

\bibitem{Fernandes&Sanwong:2014}
V.H. Fernandes and J. Sanwong,
On the rank of semigroups of transformations on a finite set with restricted range,
Algebra Colloquium, 21 (2014), 497--510.

\bibitem{Hou&Gu:2016}
H. Hou and R. Gu,
Split graphs whose completely regular endomorphisms form a monoid,
Ars Combinatoria, 127 (2016), 79--88.

\bibitem{Hou&Gu&Shang:2014}
H. Hou, R. Gu and Y. Shang,
The join of split graphs whose regular endomorphisms form a monoid,
Communications in Algebra, 42 (2014), 795--802.

\bibitem{Hou&Gu&Shang:2015}
H. Hou, R. Gu and Y. Shang,
The join of split graphs whose quasi-strong endomorphisms form a monoid,
Bulletin of the Australian Mathematical Society, 91 (2015), 1--10.

\bibitem{Hou&Luo&Cheng:2008}
H. Hou, Y. Luo and Z. Cheng,
The endomorphism monoid of $\bar{P}_{n}$,
European Journal of Combinatorics, 29 (2008), 1173--1185.

\bibitem{Hou&Luo&Fan:2012}
H. Hou, Y. Luo and S. Fan,
End-regular and End-orthodox joins of split graphs,
Ars Combinatoria, 105 (2012), 305--318.

\bibitem{Hou&Song&Gu:2017}
H. Hou, Y. Song and R. Gu,
The join of split graphs whose completely regular endomorphisms form a monoid,
De Gruyter Open, 15 (2017), 833--839.

\bibitem{Howie:1995}
J.M. Howie,
Fundamentals of Semigroup Theory,
Clarendon Press, Oxford, 1995.

\bibitem{Huisheng:2009}
P. Huisheng,
On the rank of the semigroup $T_\rho(X)$,
Semigroup Forum, 70 (2005), 107--117.

\bibitem{Knauer:2011}
U. Knauer,
Algebraic graph theory: morphisms, monoids, and matrices,
De Gruyter, Berlin, 2011.

\bibitem{Knauer&Wanichsombat:2014}
U. Knauer and A. Wanichsombat,
Completely Regular Endomorphisms of Split Graphs,
Ars Combinatoria, 115 (2014), 357--366.

\bibitem{Li:2006}
W. Li,
Split Graphs with Completely Regular Endomorphism Monoids,
Journal of mathematical research and exposition, 26 (2006), 253--263.

\bibitem{Li&Chen:2001}
W. Li and J. Chen,
Endomorphism - Regularity of Split Graphs,
European Journal of Combinatorics, 22 (2001), 207--216.

\bibitem{Lu&Wu:2013}
D. Lu and T. Wu,
Endomorphism monoids of generalized split graphs,
Ars Combinatoria, 11 (2013), 357--373.

\bibitem{Marki:1988}
L. Marki,
Problem raised at the problem session of the Colloquium on Semigroups in Szeged, August 1987,
Semigroup Forum, 37 (1988), 367--373.

\bibitem{Michels&Knauer:2009}
M. A. Michels and U. Knauer,
The congruence classes of paths and cycles,
Discrete Mathematics, 309 (2009), 5352--5359.

\bibitem{Pipattanajinda&Knauer&Gyurov&Panma:2016}
N. Pipattanajinda, U. Knauer, B. Gyurov and S. Panma,
The endomorphism monoids of $(n-3)$-regular graphs of order $n$,
Algebra and Discrete Mathematics, 22--2 (2016), 284--300.

\bibitem{Wilkeit:1996}
E. Wilkeit,
Graphs with a regular endomorphism monoid,
Arch. Math., 66 (1996), 344--352.

\bibitem{Zhao:2011}
P. Zhao,
On the ranks of certain semigroups of orientation preserving transformations,
Communications in Algebra, 39 (2011), 4195--4205.

\bibitem{Zhao&Fernandes:2015}
P. Zhao and V.H. Fernandes,
The ranks of ideals in various transformation monoids,
Communications in Algebra, 43 (2015), 674--692.

\end{thebibliography}
\end{document}